\DeclareFontFamily{OT1}{pzc}{}
\DeclareFontShape{OT1}{pzc}{m}{it}{<-> s * [1.200] pzcmi7t}{}
\DeclareMathAlphabet{\mathpzc}{OT1}{pzc}{m}{it}
	\newcommand{\mb}{\mathbb}
	\newcommand{\mf}{\mathfrak}
	\newcommand{\mc}{\mathcal}
	\newcommand{\ms}{\mathscr}
	\newcommand{\ov}{\overline}
	\newcommand{\wt}{\widetilde}
	\newcommand{\mz}{\mathpzc}
	\numberwithin{equation}{section}
\begin{document}

\title{Correlation functions of gauged linear $\sigma$-model}

\author[Tian]{Gang Tian}
\address{
Department of Mathematics \\
Princeton University \\
Fine Hall, Washington Road \\
Princeton, NJ 08544 USA
}
\email{tian@math.princeton.edu}

\author[Xu]{Guangbo Xu}
\address{
Department of Mathematics \\
University of California, Irvine\\
Irvine, CA 92697 USA 
}
\email{guangbox@math.uci.edu}

\date{\today}
	
	\newtheorem{thm}{Theorem}[section]
	\newtheorem{lemma}[thm]{Lemma}
	\newtheorem{cor}[thm]{Corollary}
	\newtheorem{prop}[thm]{Proposition}
	\newtheorem{conj}[thm]{Conjecture}
	
	\theoremstyle{definition}
	\newtheorem{defn}[thm]{Definition}
	
	\theoremstyle{remark}
	\newtheorem{rem}[thm]{Remark}
	\newtheorem{hyp}[thm]{Hypothesis}
	\newtheorem{example}[thm]{Example}
	
\setcounter{tocdepth}{1}

\maketitle

\begin{abstract}
This is the second paper in a series following \cite{Tian_Xu}, on the construction of a mathematical theory of the gauged linear $\sigma$-model (GLSM). In this paper, assuming the existence of virtual moduli cycles and their certain properties, we define the correlation function of GLSM for a fixed smooth rigidified $r$-spin curve. 
\end{abstract}

\tableofcontents

\section{Introduction}

The gauged linear $\sigma$-model (GLSM) was introduced by Witten in \cite{Witten_LGCY} in physics background towards the understanding of the Landau-Ginzburg/Calabi-Yau correspondence. In the A-model, the close-string Calabi-Yau theory is understood as counting holomorphic curves (Gromov-Witten theory). Motivated from Gromov's pioneering work \cite{Gromov_1985} and Witten's interpretation \cite{Witten_sigma_model}, the foundation of Gromov-Witten theory were built up by \cite{Ruan_96}, \cite{Ruan_Tian}, \cite{Li_Tian}, \cite{Fukaya_Ono} in the setting of symplectic geometry. Numerous work has appeared and it has become a fundamental tool in symplectic geometry as well as algebraic geometry. On the other hand, the close-string Landau-Ginzburg theory has been constructed just recently, by Fan-Jarvis-Ruan (\cite{FJR1}, \cite{FJR3}, \cite{FJR2}) following Witten's idea (see \cite{Witten_spin}). 

The GLSM unifies the two theories under one framework. It has been very influential in physics but is yet to be constructed rigorously in mathematics. This paper is the second input in a series in which we are trying to build a mathematical theory of GLSM following Witten's proposal in \cite{Witten_LGCY}.

The core in our construction is the analysis of the moduli spaces of the classical equation of motion, which we called the gauged Witten equation. In our first paper \cite{Tian_Xu}, we set the gauged Witten equation under an appropriate framework. Suppose $X_0$ is a noncompact K\"ahler manifold admitting a holomorphic ${\mb C}^*$-action, and $Q: X_0 \to {\mb C}$ is a nondegenerate homogeneous holomorphic function. A typical example of $Q$ is a nondegenerate quintic polynomial on ${\mb C}^5$. Then the superpotential of the GLSM is $W = pQ : X_0 \times {\mb C} \to {\mb C}$, which we call a superpotential of Lagrange multiplier type. Then $W$ is invariant under a ${\mb C}^*$-action on $X = X_0 \times {\mb C}$. The triple $(X, W, {\mb C}^*)$ is the ``target space'' of GLSM. On the other hand, the domain of the GLSM is a rigidified $r$-spin curve, denoted by $\vec{\mc C}$, which is a punctured Riemann surface with some additional structures. Then the gauged Witten equation is an elliptic system about a gauge field $A$ and a matter field $u$ over $\vec{\mc C}$. The details are recalled in Section \ref{section3}.

In \cite{Tian_Xu}, we also studied several crucial analytical properties of gauged Witten equation and its moduli spaces. Among them, the most crucial one is the compactness of the moduli space of solutions to the perturbed gauged Witten equation over any fixed smooth rigidified $r$-spin curve. The next crucial ingredient is the transversality of the moduli space, which can be stated as 
\begin{thm}
For any strongly regular perturbation $\vec{P}$, and any asymptotic data $\vec\upkappa$, for any homology type $B$ of solutions (see Section \ref{section3} for precise meanings), the moduli space ${\mc M}\left( \vec{{\mc C}}; B, \vec\upkappa \right)$ of gauge equivalence classes of solutions to the $\vec{P}$-perturbed gauged Witten equation over $\vec{C}$, whose asymptotics are described by $\vec\upkappa$ and whose homology classes are prescribed by $B$, is compact and admits a virtual fundamental class
\begin{align*}
\left[ {\mc M}\left( \vec{{\mc C}}; B, \vec\upkappa \right) \right]^{vir} \in H_* \left( {\mc M}\left( \vec{\mc C}; B, \vec\upkappa \right) ; {\mb Q} \right).
\end{align*}
\end{thm}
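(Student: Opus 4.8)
\emph{Proof strategy.} The assertion splits into two parts: compactness of $\mc M(\vec{\mc C}; B, \vec\upkappa)$, and the construction of a virtual fundamental class on it. For the first part I would simply invoke the main compactness theorem of \cite{Tian_Xu}: the set of gauge equivalence classes of solutions to the $\vec P$-perturbed gauged Witten equation over the fixed smooth curve $\vec{\mc C}$, with asymptotics prescribed by $\vec\upkappa$ and homology type $B$, is sequentially compact in the weighted $C^\infty_{loc}$ topology with exponential decay at the punctures — the proof there controls the gauge field by Uhlenbeck-type estimates and the matter field by the energy bound attached to $B$ together with the a priori $C^0$-estimate near the critical locus of $W$. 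Hence $\mc M(\vec{\mc C}; B, \vec\upkappa)$ is a compact metrizable space, and it remains to put a virtual fundamental class on it.

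For the second part, the plan is to realize $\mc M(\vec{\mc C}; B, \vec\upkappa)$ as the zero set of a Fredholm section and then run the standard virtual-cycle machinery. First I would fix weighted Sobolev completions $\mc B^{k,p}_\delta$ of the configuration space of pairs $(A,u)$ over the punctured surface underlying $\vec{\mc C}$, with $u$ constrained to converge at each puncture to the critical-point data recorded in $\vec\upkappa$; the rigidification of the $r$-spin structure makes this a Banach manifold on which the gauge group acts with finite isotropy. After imposing a local Coulomb-type slice at a solution, the perturbed gauged Witten equation becomes a smooth section $\mc F$ of a Banach bundle whose linearization $D\mc F$ is, up to compact terms, the direct sum of a Cauchy-Riemann operator on the matter bundle and the gauge-theoretic deformation operator; since the asymptotic operators at the punctures are nondegenerate (using $\vec\upkappa$ and the nondegeneracy of $W$), $D\mc F$ is Fredholm in the weighted norms, and its index gives the expected dimension of $\mc M(\vec{\mc C}; B, \vec\upkappa)$.

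Next, at each $[(A,u)] \in \mc M(\vec{\mc C}; B, \vec\upkappa)$ I would construct a Kuranishi chart: choose a finite-dimensional obstruction space $E$ surjecting onto $\mathrm{coker}\, D\mc F$, so the thickened equation $\mc F + e = 0$ is transverse, and use elliptic regularity together with the exponential-decay estimates to identify its solution set near $(A,u)$ with the zero locus of a smooth $\Gamma_{(A,u)}$-equivariant Kuranishi map $s : V \to E$ on a neighborhood $V$ of the origin in $\ker D\mc F$. By compactness finitely many such charts cover $\mc M(\vec{\mc C}; B, \vec\upkappa)$, and the standard patching formalism (\cite{Fukaya_Ono}, \cite{Li_Tian}) assembles them into a Kuranishi structure of the expected dimension with no codimension-one boundary — the latter because $\vec{\mc C}$ is held fixed and smooth, so there is no nodal degeneration of the domain. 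A compatible system of transverse multisections then has a weighted branched zero set carrying a rational fundamental cycle, whose class in $H_*(\mc M(\vec{\mc C}; B, \vec\upkappa); \mb Q)$ is independent of all choices; this is the desired $[\mc M(\vec{\mc C}; B, \vec\upkappa)]^{vir}$.

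The main obstacle is the Fredholm and Kuranishi-chart construction over the \emph{noncompact} domain and target. One must control the matter field where the surface is truncated near the punctures and where $u$ runs toward the non-compact directions of $X_0$ or toward the critical locus of $W$, so that $D\mc F$ is genuinely Fredholm in the weighted Sobolev spaces and the Kuranishi maps are well defined there; this relies on the exponential-decay and a priori estimates, the nondegeneracy of the asymptotic operators, and a gauge fixing compatible with the asymptotic constraints, all of which are supplied by \cite{Tian_Xu}. By contrast, the gluing analysis that normally dominates such arguments is comparatively mild here, precisely because the domain curve is not allowed to degenerate.
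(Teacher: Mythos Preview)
The paper does not actually prove this theorem: immediately after stating it the authors write that the virtual fundamental class ``can be constructed by known techniques, such as the techniques developed in \cite{Fukaya_Ono} and \cite{Li_Tian}'' and that ``it will be done in the incoming paper \cite{Tian_Xu_3}''; the same statement reappears as Theorem~\ref{thm35} with the citation \cite{Tian_Xu_3} in lieu of a proof. So there is no in-paper argument to compare against, only an indicated strategy.

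Your outline is consistent with that indicated strategy and with what the paper imports from \cite{Tian_Xu}: compactness for strongly regular $\vec P$ is quoted from \cite{Tian_Xu} (their Theorem summarized in Section~\ref{section3}, item (3)); the realization of the moduli space as the zero locus of a Fredholm section of a Banach bundle, with index (\ref{equation34}), is also attributed to \cite{Tian_Xu}; and the Kuranishi/virtual-cycle step is exactly what they defer to \cite{Tian_Xu_3} via the Fukaya--Ono/Li--Tian machinery. One refinement worth making: your justification that there is ``no codimension-one boundary'' because the domain curve is fixed is incomplete. Even with $\vec{\mc C}$ fixed, the compactness theorem in \cite{Tian_Xu} (item (3) as quoted here) says the moduli is compact only \emph{up to degeneration of solitons at broad punctures}, and it is precisely the \emph{strong regularity} of $\vec P$ that rules out such soliton bubbling; without it you would have genuine boundary strata (this is the mechanism behind the wall-crossing in Section~\ref{section4}). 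So the absence of boundary should be attributed to strong regularity, not merely to the domain being smooth and fixed.
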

The virtual fundamental class can be constructed by known techniques, such as the techniques developed in \cite{Fukaya_Ono} and \cite{Li_Tian}. It will be done in the incoming paper \cite{Tian_Xu_3}. We also remark that it is possible to use concrete perturbations of the gauged Witten equation to achieve transversality. 

In the scope of the current series, we only consider the moduli space for a fixed smooth $r$-spin curve with a rigidification. We have the associated virtual count
\begin{align}\label{equation11}
\# {\mc M}\left( \vec{\mc C}; B, \vec\upkappa \right) \in {\mb Q}
\end{align}
which is defined to be zero if the degree of the virtual fundamental cycle is nonzero.

The correlator is defined as a family of multi-linear maps on certain state space ${\ms H}_Q$ (see Section \ref{section2}). ${\ms H}_Q$ can be viewed as a generalization of both the state space in Landau-Ginzburg A-model and the state space in gauged Gromov-Witten theory. If $\vec{C}$ has $m$ marked points, then certain linear combinations of virtual counts (\ref{equation11}) give the correlation function (see Section \ref{section3}). 
\begin{align*}
\left\langle \ \cdot, \cdots, \cdot \ \right\rangle_{\vec{C}}^B: {\ms H}^{\otimes m}_Q \to {\mb Q}.
\end{align*}
The coefficients of the linear combinations as well as the virtual counts (\ref{equation11}) depend on the choice of a strongly regular perturbation. However, we have
\begin{thm}
The correlation function is independent of the choice of ``strongly regular '' perturbations and various other choices.
\end{thm}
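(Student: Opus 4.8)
The plan is a cobordism argument carried out at the level of virtual fundamental cycles. Fix $\vec{\mc C}$ together with the insertions, and let $\vec P^0, \vec P^1$ be two strongly regular perturbations. The first step is to join them by a smooth path $\{\vec P^t\}_{t \in [0,1]}$ of admissible perturbations. If the class of strongly regular perturbations is not visibly path-connected, one enlarges it to the class of admissible perturbations for which the compactness theorem of \cite{Tian_Xu} still holds, connects $\vec P^0$ to $\vec P^1$ within that larger class, and then perturbs the path (rel endpoints) by a Sard--Smale argument so that the associated parametrized problem is transverse; strong regularity being a generic condition, this can be arranged. This produces, for each homology type $B$ and asymptotic data $\vec\upkappa$, a parametrized moduli space
\begin{align*}
{\mc M}\left( \vec{\mc C}; B, \vec\upkappa; \{\vec P^t\} \right) = \bigcup_{t \in [0,1]} \{t\} \times {\mc M}^t \left( \vec{\mc C}; B, \vec\upkappa \right),
\end{align*}
whose two ends recover the $\vec P^0$- and $\vec P^1$-perturbed moduli spaces.

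The two real inputs are compactness and a parametrized virtual cycle. For compactness, one re-examines the proof of the compactness theorem in \cite{Tian_Xu} and checks that all of the a priori estimates --- the energy bound, the $C^0$ estimate on the matter field via the maximum principle, the asymptotic decay estimates at the punctures, and the exclusion of bubbling in the fixed homology type $B$ --- hold uniformly over the compact family $\{\vec P^t\}$; since $[0,1]$ is compact and $t \mapsto \vec P^t$ is smooth within a class carrying uniform bounds on the perturbation and finitely many of its derivatives, this uniformity is automatic. Hence ${\mc M}(\vec{\mc C}; B, \vec\upkappa; \{\vec P^t\})$ is compact. Then the virtual cycle machinery of \cite{Fukaya_Ono} and \cite{Li_Tian}, applied to the parametrized problem --- whose linearized operator has index one greater than the unparametrized one, and whose Kuranishi structure restricts over $\{0\} \times {\mc M}^0$ and $\{1\} \times {\mc M}^1$ to the ones used to define $[{\mc M}(\vec{\mc C}; B, \vec\upkappa)]^{vir}$ --- produces a virtual chain with
\begin{align*}
\partial \left[ {\mc M}\left( \vec{\mc C}; B, \vec\upkappa; \{\vec P^t\} \right) \right]^{vir} = \left[ {\mc M}^1 \left( \vec{\mc C}; B, \vec\upkappa \right) \right]^{vir} - \left[ {\mc M}^0 \left( \vec{\mc C}; B, \vec\upkappa \right) \right]^{vir}.
\end{align*}

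To pass from virtual cycles to correlators, observe that the correlator is assembled from these cycles by pushing forward along the evaluation maps at the punctures and pairing with the given elements of ${\ms H}_Q$; both the evaluation maps and the coefficients of the assembling linear combination are determined by the (parametrized) moduli data, hence extend over $\{\vec P^t\}$. Since $\langle\,\cdot, \cdots, \cdot\,\rangle_{\vec{\mc C}}^B$ is a number, its value is a cobordism invariant of this parametrized package, and the displayed boundary relation forces the values computed from $\vec P^0$ and $\vec P^1$ to coincide, even though the counts and the coefficients individually vary. The remaining ``various other choices'' --- the Hermitian metrics and connection reference data on the bundles over $\vec{\mc C}$, the auxiliary data entering the definition of ${\ms H}_Q$, the representative of the rigidification, and the choices internal to the construction of the virtual fundamental class --- are handled in the same way: one runs the cobordism argument over a path of such data, using that the compactness and Fredholm results of \cite{Tian_Xu} are stable under small deformations, and for the last one invokes the uniqueness up to cobordism of the virtual fundamental class established in \cite{Fukaya_Ono}, \cite{Li_Tian}, and \cite{Tian_Xu_3}.

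The step I expect to be the main obstacle is the uniform compactness along the path: one must rule out that moving through a one-parameter family of perturbations creates energy concentration, escape of the matter field to infinity in the noncompact target $X$, or degeneration of the asymptotics at the punctures --- equivalently, one must verify that the a priori estimates of \cite{Tian_Xu} are genuinely uniform in the perturbation over the chosen admissible class. A secondary technical point is organizing the parametrized transversality while keeping the two ends of the cobordism exactly the strongly regular moduli spaces appearing in the statement, which is why one may need to connect inside a weaker admissible class and recover strong regularity generically.
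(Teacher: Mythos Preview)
There is a genuine gap in your argument, and it is exactly the step you flagged as the ``main obstacle'': uniform compactness of the parametrized moduli space along the path. It fails. As the paper explains in Section~\ref{section4}, a generic path $\{\vec P^t\}$ between two strongly regular perturbations must cross the locus where strong regularity is lost (two critical values of $W_i|_{X_{\upgamma_i}}$ acquire equal imaginary parts). At such a crossing time, solutions can degenerate by splitting off a BPS soliton at the corresponding broad puncture, so the universal moduli space ${\mc N}=\bigcup_t {\mc M}_{\vec P^t}(\vec{\mc C},B,\vec\upkappa^t)$ is genuinely noncompact in the interior. Your boundary relation $\partial[{\mc N}]^{vir}=[{\mc M}^1]^{vir}-[{\mc M}^0]^{vir}$ is therefore incorrect: there is an additional boundary contribution of the form ${\mc M}_{\vec P^0}(\vec{\mc C},B,\vec\upkappa_+)\times{\mc M}_{BPS}$, and this produces the wall-crossing formula (Theorem~\ref{thm45}) for the virtual counts rather than their equality.

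Equally important, and not addressed in your proposal, is that the \emph{coefficients} in the definition (\ref{equation36}) of the correlator --- the intersection numbers $\theta_i^*\cap[\upkappa_i^-]$ --- also jump at the same wall. The relative cycles $[\upkappa_i^-]$ are unstable manifolds of the gradient flow of ${\rm Re}\,F_i$ on $Q_{\upgamma_i}^{a_i}$, and when two critical values collide and separate, these cycles transform by the Picard--Lefschetz formula (Theorem~\ref{thm43}). So your sentence ``the coefficients of the assembling linear combination \ldots\ extend over $\{\vec P^t\}$'' is false: they extend only away from crossings, and at a crossing they change by a term involving exactly the same BPS-soliton count. The actual argument is that the wall-crossing term in Theorem~\ref{thm45} and the Picard--Lefschetz term in Theorem~\ref{thm43} cancel in the sum (\ref{equation36}); this cancellation, not a direct cobordism, is what makes the correlator invariant. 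Your outline would only work if strongly regular perturbations formed a path-connected space within which one could stay throughout the homotopy, which they do not.
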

The proof is basically a cobordism argument. In many similar situation, such as Donaldson theory and Gromov-Witten theory, one can actually prove a direct cobordism of moduli spaces obtained by choosing different auxiliary data, such as the metric or the almost complex structure (cf. \cite{Donaldson_Kronheimer}, \cite{McDuff_Salamon_2004}). In cases where virtual techniques are used, one can prove a cobordism in the virtual sense (cf. \cite{Fukaya_Ono}, \cite{Li_Tian}, \cite{Mundet_Tian_Draft}). In the current situation, the moduli spaces for different strongly regular perturbations may not be cobordant directly, but bifurcations (wall-crossings) happen in the interior of the cobordism. Such bifurcation analysis were carried out in many cases, such as \cite{Floer_intersection} and (closest to our situation) \cite{FJR3}. Then the difference between the virtual counts (\ref{equation11}) for two sets of perturbations is given by a wall-crossing formula (see Theorem \ref{thm45}). The coefficients of the linear combinations in defining the correlatino functions also differ by opposite wall-crossing terms, which exactly make the correlation function invariant. The detailed proof of the wall-crossing formula is given in \cite{Tian_Xu_3}.

\subsubsection*{Organization of the paper}

In Section \ref{section2} we define the state spaces in our formulation of GLSM. In Section \ref{section3} we define the correlation function, assuming the existence of the virtual cycles. In Section \ref{section4} we list the properties of the virtual cycles which are necessary to derive the well-definedness of the correlation functions. 

\subsubsection*{Acknowledgements}

We would like to thank Simons Center for Geometry and Physics for hospitality during our visit in summer 2013. We would like to thank Kentaro Hori, David Morrison, Edward Witten for useful discussions on GLSM. The second author would like to thank Chris Woodward for helpful discussions.

\section{The state space for GLSM}\label{section2}

The inputs of the correlation function we are going to define are classes (states) in certain cohomology groups (the state space). We have analogues in previously studied theories. In Gromov-Witten theory, the state spaces are the ordinary cohomology groups of symplectic manifolds; in gauged Gromov-Witten theory, they are equivariant cohomologies; in Fan-Jarvis-Ruan's Landau-Ginzburg A-model (see \cite{FJR2}), they are certain cohomology groups naturally associated with the singularity.  

\subsubsection*{Lagrange multipliers}

Let $(X_0, \omega, J)$ be a noncompact K\"ahler manifold. Assume that there is a holomorphic ${\mb C}^*$-action which restricts to a Hamiltonian $S^1$-action on $X_0$. For every $\upgamma \in S^1$, let $X_{0, \upgamma} \subset X_0$ be the fixed point set of $\upgamma$ and let $N_{0, \upgamma} \to X_{0, \upgamma}$ be the normal bundle. Suppose $Q: X_0 \to {\mb C}$ is a holomorphic function. We assume that $Q$ is homogeneous of degree $r$. This means that 
\begin{align*}
\forall \xi \in {\mb C}^*,\ x\in X_0,\ Q(\xi x) = \xi^r Q(x).
\end{align*}
For any $a \in {\mb C}$, let $Q^a:= Q^{-1}(a) \subset X_0$. Then ${\mb Z}_r$ acts on $Q^a$. We make the following assumptions on $Q$.
\begin{hyp}
\begin{enumerate}
\item[({\bf Q1})] $Q$ has a unique critical point $\bigstar \in X_0$ (the critical value must be zero).

\item[({\bf Q2})] There exist a constant $c_Q>1$ and $G$-invariant compact subset $K_0 \subset X_0$ such that
\begin{align*}
x\notin K_0\Longrightarrow {1\over c_Q} \left| \nabla^3 Q \right| \leq \left| \nabla^2 Q \right| \leq c_Q \left| \nabla Q \right|.
\end{align*}
Moreover, for every $\delta>0$, there exists $c_Q(\delta)>0$ such that
\begin{align*}
d(x, Q^0)\geq \delta,\ x\notin K_0\Longrightarrow |\nabla Q(x)| \leq c_Q(\delta) |Q(x)|.
\end{align*}

\item[({\bf Q3})] For every $\upgamma \in {\mb Z}_r$, it is easy to see that $dQ$ vanishes along the normal bundle $X_{0, \upgamma}$. We assume that $\nabla^2 Q$ vanishes along $N_{0, \upgamma}$.
\end{enumerate}
\end{hyp}

\begin{rem}
The above hypothesis was assumed in \cite{Tian_Xu}. ({\bf Q2}) is necessary to guarantee the compactness of the moduli space. ({\bf Q3}) is not essential and can be removed. These conditions are satisfied nondegenerate quasi-homogeneous polynomials on ${\mb C}^N$.
\end{rem}

\subsubsection*{GLSM State space}
	
The state space consists of narrow sectors and broad sectors. 
\begin{defn}
$\upgamma \in {\mb Z}_r$ is {\bf broad} (resp. {\bf narrow}) if the restriction $Q_\upgamma:= Q|_{X_{0, \upgamma}}$ doesn't (resp. does) vanish identically. 
\end{defn}

It is easy to prove that $\bigstar\in X_{0, \upgamma}$ for each $\upgamma$ and for broad $\upgamma$, still the unique critical point of $Q_\upgamma$.
	
From now on we will introduce many homology and cohomology groups. Whenever the coefficient ring is omitted, we mean integral homology or cohomology.
\begin{defn}
If $\upgamma$ is narrow, then the (reduced) $\upgamma$-sector of the GLSM state space is a 1-dimensional ${\mb Q}$-vector space, generated by an element $e_\upgamma$. If $\upgamma$ is broad, then the ({\it reduced}) $\upgamma$-sector of the GLSM state space is
\begin{align*}
{\ms H}_\upgamma:= H^{n_\upgamma-1} \left( Q_\upgamma^a; {\mb Q} \right)^{{\mb Z}_r}.
\end{align*}
Here $n_\upgamma = {\rm dim}_{{\mb C}} X_{0, \upgamma}$ and $a\in {\mb C}^*$ is an arbitrary regular value of $Q_\upgamma$. The total state space is 
\begin{align*}
{\ms H}_Q:= \bigoplus_{\upgamma \in {\mb Z}_r} {\ms H}_\upgamma.
\end{align*}
\end{defn}

Here, to see that the broad sectors are independent of the choice of $a$, we consider the {\it monodromy action}, which is a linear isomorphism 
\begin{align*}
{\mf m}: H^* \left( Q_\upgamma^a \right) \to H^* \left( Q_\upgamma^a \right).
\end{align*}
This map is defined as a straightforward extension of the monodromy action for isolated singularities. Namely, let $C$ be a simple closed curve in ${\mb C}$ which passes through $a$ and avoids the origin (the singular value of $Q_\upgamma$). Then $Q_\upgamma^{-1}(C) \to C$ is a locally trivial fibration. This gives the monodromy action ${\mf m}$ on the integral homology of $Q_\upgamma^a$, which is independent of the choice of such simple closed curves.

\begin{lemma}
${\mf m}$ is equal to the action by the generator of ${\mb Z}_r$ on $H^*\left( Q^a \right)$. 
\end{lemma}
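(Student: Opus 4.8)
The plan is to identify the monodromy of the fibration $Q_\upgamma^{-1}(C)\to C$ explicitly, using the fact that $Q_\upgamma$ is homogeneous of degree $r$ with respect to the ${\mb C}^*$-action on $X_{0,\upgamma}$, and that a generator of ${\mb Z}_r\subset{\mb C}^*$ acts on $Q_\upgamma^a$ precisely because $\zeta\in{\mb Z}_r$ satisfies $Q_\upgamma(\zeta x)=\zeta^r Q_\upgamma(x)=Q_\upgamma(x)$. Take the circle $C=\{|z|=|a|\}$ and parametrize it as $\theta\mapsto e^{i\theta}a$, $\theta\in[0,2\pi]$. I would construct an explicit trivialization of $Q_\upgamma^{-1}(C)\to C$ by transporting the fiber $Q_\upgamma^{a}$ along $C$ using the ${\mb C}^*$-action: define $\Psi_\theta:Q_\upgamma^{a}\to Q_\upgamma^{e^{i\theta}a}$ by $\Psi_\theta(x)=e^{i\theta/r}\cdot x$, which makes sense because $Q_\upgamma(e^{i\theta/r}x)=e^{i\theta}Q_\upgamma(x)=e^{i\theta}a$. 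This family of diffeomorphisms realizes the parallel transport along $C$ of a suitable (Ehresmann) connection, so the monodromy — going once around, $\theta:0\to 2\pi$ — is the self-map $\Psi_{2\pi}:Q_\upgamma^{a}\to Q_\upgamma^{a}$ given by $x\mapsto e^{2\pi i/r}\cdot x$, which is exactly the action of a generator of ${\mb Z}_r$.

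The steps, in order: (1) fix $C$ and the parametrization above; (2) verify that $\Psi_\theta$ is a well-defined diffeomorphism onto $Q_\upgamma^{e^{i\theta}a}$, using the homogeneity of $Q_\upgamma$ and the fact that the ${\mb C}^*$-action is by biholomorphisms preserving $X_{0,\upgamma}$; (3) observe that $\{\Psi_\theta\}$ gives a smooth trivialization $Q_\upgamma^{-1}(C)\cong C\times Q_\upgamma^{a}$ away from a basepoint, or directly that it is the holonomy of a connection whose horizontal lift of $\partial_\theta$ is the generating vector field of the $\theta\mapsto e^{i\theta/r}$ flow; (4) compute $\Psi_{2\pi}=$ (action of $e^{2\pi i/r}$) $=$ (generator of ${\mb Z}_r$); (5) pass to (co)homology, noting that the induced map on $H^*(Q_\upgamma^a)$ is, by definition of the monodromy action ${\mf m}$ and its independence of the choice of loop, equal to $(\Psi_{2\pi})^*$, hence to the generator of ${\mb Z}_r$. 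Since all relevant maps are equivariant and continuous, the cohomological statement follows from the space-level identity.

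The main obstacle — really the only subtlety — is point (2)–(3): one must be careful that the ${\mb C}^*$-action does not fix $Q_\upgamma^a$ pointwise (so $e^{i\theta/r}\cdot x\ne x$ in general, and the trivialization genuinely "winds"), yet the composite $\Psi_{2\pi}$ lands back in the \emph{same} fiber and equals the ${\mb Z}_r$-action rather than the identity; this is exactly the $r$-th root ambiguity that produces nontrivial monodromy for homogeneous, as opposed to merely ${\mb C}^*$-invariant, functions. A secondary point is to confirm that $\Psi_\theta$ is defined on all of $Q_\upgamma^a$ and not just near $\bigstar$ — here we use that $a$ is a regular value, so $Q_\upgamma^a$ is a smooth manifold on which ${\mb C}^*$ acts freely away from issues at infinity, and the action is complete enough to define the flow for all $\theta$; properness/completeness of the action in the relevant region is the one thing that would need a short argument or a reference to the setup in \cite{Tian_Xu}. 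Everything else is a routine verification that parallel transport along $C$ under this explicit connection agrees with $\{\Psi_\theta\}$, together with the standard fact that the monodromy class is independent of the chosen loop in ${\mb C}^*$.
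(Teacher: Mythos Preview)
Your proposal is correct and is essentially the same argument as the paper's: both use the ${\mb C}^*$-action to realize parallel transport around the loop of regular values, observing that the map $x\mapsto e^{i\theta/r}x$ carries $Q_\upgamma^a$ to $Q_\upgamma^{e^{i\theta}a}$ and that going once around gives precisely the action of $e^{2\pi i/r}$, the generator of ${\mb Z}_r$. The only difference is cosmetic---the paper parametrizes by the group element $e^{i\theta}$ with $\theta\in[0,2\pi/r]$ rather than by the base point---and your worry about completeness of the action is unnecessary, since the $S^1$-action on $X_{0,\upgamma}$ is automatically globally defined.
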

\begin{proof}
Consider the path $e^{{\bm i} \theta}$ for $\theta \in [0, {2\pi \over r}]$. Let $a(\theta) = e^{{\bm i} r \theta} a$. Then $e^{{\bm i} \theta}$ induces an isomorphism
\begin{align*}
H^*\left( Q^{a(\theta)}; {\mb Z} \right) \to H^* \left( Q^a; {\mb Z}\right)
\end{align*}
which is equal to the parallel transport along the arc between $a$ and $a(\theta)$. Since $t({2\pi \over r}) = t$, we see that the action by the generator of ${\mb Z}_r$ is equal to the parallel transport along a loop, which is exactly the monodromy action.
\end{proof}
Therefore, we see that the ${\mb Z}_r$-invariant part of $H^* \left( Q_\upgamma^a \right)$ is independent of the choice of $a\in {\mb C}^*$. Therefore the state space is well-defined. 

\begin{rem}
There is an enrichment of the state space by including more states for the broad sectors. This enrichment, formally, will be closer to the state space in \cite{FJR2} and the state space used in gauged Gromov-Witten theory. It will be mentioned at the end of this section.
\end{rem}

\subsubsection*{Vanishing cycles and Lefschetz thimbles}

We need a geometric description of generators of the homology group dual to the state space, i.e., vanishing cycles and Lefschetz thimbles. For each $\upgamma$, we can identify a neighborhood of $\bigstar$ in $X_{0, \upgamma}$ as a neighborhood of $0$ in ${\mb C}^{n_\upgamma}$. Denote this neighborhood by $B_\epsilon^{n_\upgamma}$. Then choose $\delta<< \epsilon$ and denote $U_\delta \subset {\mb C}$ the $\delta$-neighborhood of the origin. Then for $t \in \partial U_\delta$, denote $V_t:= B_\epsilon^{n_\upgamma} \cap Q_\upgamma^t$ and denote $V_T:= Q_\upgamma^{-1}(U_\delta) \cap B_\epsilon^{n_\upgamma}$. Then the classical result of Brieskorn \cite{Brieskorn_1970} says that the relative homology
\begin{align*}
H_{n_\upgamma} \left( V_T, V_t\right)
\end{align*}
is generated by Lefschetz thimbles.

Moreover, we have
\begin{lemma}
The inclusion $(V_T, V_t) \to \left(X_{0, \upgamma} , Q_\upgamma^t \right)$ induces an isomorphism
\begin{align*}
H_{n_\upgamma} \left( V_T, V_t \right) \simeq H_{n_\upgamma} \left( X_{0, \upgamma}, Q_\upgamma^t\right).
\end{align*}
\end{lemma}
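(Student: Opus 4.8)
The plan is to show that the pair $(V_T, V_t)$ carries all of the relative homology of $(X_{0,\upgamma}, Q_\upgamma^t)$ in the top relevant degree by a deformation-retraction argument adapted to the homogeneity of $Q_\upgamma$. First I would use the fact that $Q_\upgamma$ is homogeneous of degree $r$ with respect to the ${\mb C}^*$-action (its critical point $\bigstar$ being the unique one, by ({\bf Q1}) and the remark that $\bigstar$ is still the unique critical point of $Q_\upgamma$). Homogeneity gives a radial ${\mb R}_{>0}$-scaling on $X_{0,\upgamma}$ near $\bigstar$ under which $|Q_\upgamma|$ is strictly monotone along orbits, so that the sublevel-type region $Q_\upgamma^{-1}(U_\delta)$ sits inside $X_{0,\upgamma}$ as a ``core'' onto which a neighborhood deformation retracts. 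Concretely, I would build a flow (a gradient-like vector field for $|Q_\upgamma|$, or simply the Euler vector field of the ${\mb C}^*$-action combined with a cutoff away from $\bigstar$) pushing points of $X_{0,\upgamma}$ with large $|Q_\upgamma|$ down toward the region $Q_\upgamma^{-1}(U_\delta)$, and simultaneously pushing the level set $Q_\upgamma^t$ for large-norm $t$ down to $V_t$. The compactness hypothesis ({\bf Q2}), which controls $|\nabla Q|$ in terms of $|Q|$ away from the compact set $K_0$, is exactly what guarantees this flow is complete and well-behaved at infinity.

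The second step is to organize this into the usual two isomorphisms. Using excision and the retraction of $X_{0,\upgamma}$ (minus a neighborhood of infinity) onto $V_T = Q_\upgamma^{-1}(U_\delta) \cap B_\epsilon^{n_\upgamma}$, together with the retraction of a fixed regular level set $Q_\upgamma^t$ onto $V_t = B_\epsilon^{n_\upgamma} \cap Q_\upgamma^t$, the map of pairs $(V_T, V_t) \to (X_{0,\upgamma}, Q_\upgamma^t)$ becomes a homotopy equivalence of pairs after restricting to the part of $X_{0,\upgamma}$ where everything is controlled. I would then invoke the long exact sequences of the two pairs and the five lemma, reducing the claim to: (i) $V_T \hookrightarrow X_{0,\upgamma}$ is a homotopy equivalence (both deformation-retract onto $\bigstar$, using that $Q_\upgamma^{-1}(U_\delta)\cap B_\epsilon$ is contractible since $0$ is the only critical value and $\bigstar$ the only critical point — this is a standard consequence of the local conical structure of an isolated singularity, à la Milnor, except here the ``singularity'' may be non-isolated but the critical set is still just $\{\bigstar\}$ by ({\bf Q1})), and (ii) $V_t \hookrightarrow Q_\upgamma^t$ induces isomorphisms on homology up through degree $n_\upgamma - 1$ — equivalently $(Q_\upgamma^t, V_t)$ is $(n_\upgamma)$-connected — which again follows from the radial flow: outside $B_\epsilon$ the level set $Q_\upgamma^t$ retracts onto its intersection with $\partial B_\epsilon$, so no homology in degrees below the top is created outside the Milnor ball.

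The main obstacle, and the place where genuine care is needed, is the behavior at infinity: $X_{0,\upgamma}$ is noncompact, so the deformation retractions are not automatic and must be constructed explicitly using ({\bf Q2}). In particular I must check that the gradient-like flow for $|Q_\upgamma|$ has no trajectories escaping to infinity in finite time and that its limit set as ``time $\to \infty$'' lands in $Q_\upgamma^{-1}(U_\delta)$ — i.e.\ that $|Q_\upgamma|$ is, in a suitable sense, proper and has no critical points at infinity on $X_{0,\upgamma}$. The inequality $|\nabla Q(x)| \le c_Q(\delta)|Q(x)|$ for $d(x, Q^0) \ge \delta$ and $x \notin K_0$ from ({\bf Q2}) is precisely the tool: it shows $|Q_\upgamma|$ decays at a controlled (at worst exponential) rate along the negative gradient flow away from the zero locus, while near $Q^0$ one can cut off and use a different piece of the flow. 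A secondary subtlety is that $\bigstar$ need \emph{not} be an isolated point of $Q_\upgamma^0$ as a variety when $\upgamma$ is broad (the fiber $Q_\upgamma^0$ can be positive-dimensional), so one cannot quote Milnor's fibration theorem verbatim; one must instead use only that $\bigstar$ is the unique \emph{critical} point, which still gives the local fibration $V_T \setminus V_0 \to U_\delta \setminus \{0\}$ and the contractibility of $V_T$ onto $\bigstar$ via the (now singular but critical-point-free) radial structure.
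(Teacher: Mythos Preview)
Your overall strategy is reasonable but differs from the paper's and carries one unjustified step. The paper's argument is shorter and more structural: it first observes (using only that $\bigstar$ is the sole critical point) that $X_{0,\upgamma}$ deformation-retracts onto the tube $Q_\upgamma^{-1}(U_\delta)$, so it suffices to compare $(V_T, V_t)$ with $\left(Q_\upgamma^{-1}(U_\delta), Q_\upgamma^t\right)$. It then sets $\check V_T = Q_\upgamma^{-1}(U_\delta)\setminus V_T$ and $\check V_t = Q_\upgamma^t \setminus V_t$; since $Q_\upgamma$ has no critical point on $\check V_T$, the restriction $Q_\upgamma:\check V_T \to U_\delta$ is a trivial fibration, giving a homotopy equivalence of pairs $\left(Q_\upgamma^{-1}(U_\delta), Q_\upgamma^t\right)\sim \left(V_T \cup \check V_t, Q_\upgamma^t\right)$, after which excision yields the result. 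No five-lemma, no connectivity statement about the fiber pair, and no appeal to ({\bf Q2}).

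Your route via the long exact sequences reduces to step (ii), the $n_\upgamma$-connectivity of $(Q_\upgamma^t, V_t)$, which you claim ``follows from the radial flow''. But the ${\mb C}^*$-scaling does \emph{not} preserve the level set $Q_\upgamma^t$: it sends $Q_\upgamma^t$ to $Q_\upgamma^{\lambda^r t}$, so it cannot retract $Q_\upgamma^t \setminus B_\epsilon^{n_\upgamma}$ onto $Q_\upgamma^t \cap \partial B_\epsilon^{n_\upgamma}$ inside the fiber. What one would actually need is that the distance-to-$\bigstar$ function restricted to $Q_\upgamma^t$ has no critical points outside $B_\epsilon^{n_\upgamma}$ --- a Milnor-type transversality statement that requires its own argument in this noncompact K\"ahler setting and which you have not supplied. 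The paper's excision trick sidesteps this entirely by never working inside a single fiber: it deforms the \emph{total space} of the tube using the product structure $\check V_T \simeq U_\delta \times \check V_t$ and then excises $\check V_t$. Finally, your invocation of ({\bf Q2}) is misdirected: the inequality $|\nabla Q|\le c_Q(\delta)|Q|$ is an \emph{upper} bound on the gradient, so it bounds the speed of a gradient flow from above and does not force trajectories to reach $Q_\upgamma^{-1}(U_\delta)$; the genuine retraction (both in your sketch and implicitly in the paper's trivialization of $\check V_T$) comes from the global ${\mb C}^*$-action and homogeneity of $Q_\upgamma$, not from ({\bf Q2}).
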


\begin{proof}
Since $\bigstar$ is the only critical point, we see that $Q_\upgamma^{-1}(U_\delta)$ is a deformation retract of $X_{0, \upgamma}$. Therefore it suffices to prove the isomorphism
\begin{align*}
H_{n_\upgamma} \left( V_T, V_t \right) \simeq H_{n_\upgamma} \left( Q_\upgamma^{-1}(U_\delta), Q_\upgamma^t \right).
\end{align*}
Let $\check{V}_T: Q_\upgamma^{-1}(U_\delta) \setminus V_T$, $\check{V}_t:= Q_\upgamma^t \setminus V_t$. Then because the restriction $Q_\upgamma:\check{V}_T \to U_\delta$ has no critical point, it is a trivial fibration. Therefore for $t \in B_\delta$, we have a trivialization 
\begin{align*}
\check{V}_T \simeq U_\delta \times \check{V}_t.
\end{align*}
Therefore, we have homotopy equivalence
\begin{align*}
\left( Q_\upgamma^{-1}(U_\delta), Q_\upgamma^t \right) \sim \left( V_T \cup \check{V}_t, Q^t_\upgamma \right).
\end{align*}
Then by excision, we have
\begin{align*}
H_{n_\upgamma} \left( Q_\upgamma^{-1}(U_\delta), Q_\upgamma^t \right) \simeq H_{n_\upgamma} \left( V_T \cup \check{V}_t; Q_\upgamma^t \right) \simeq H_{n_\upgamma} \left( V_T, V_t \right).
\end{align*}
\end{proof}

So we say that the relative homology $H_{n_\upgamma}\left( X_{0, \upgamma}, Q_\upgamma^t \right)$ is generated by Lefschetz thimbles. Indeed by the local triviality of the fibration $X_{0, \upgamma}\setminus \{\bigstar\} \to {\mb C}^*$, we see that this is true not just for $t$ close to $0$, but all nonzero $t$. 

Now for each $a \in {\mb C}^*$, we have the exact sequence  
\begin{align*}
H_{n_\upgamma} (X_{0, \upgamma}) \to H_{n_\upgamma} \left(X_{0, \upgamma}, Q_\upgamma^a \right) \to H_{n_\upgamma-1} \left( Q_\upgamma^a \right) \to H_{n_\upgamma-1} (X_{0, \upgamma}). 
\end{align*}
We make the following simplifying assumption
\begin{hyp}\label{hyp27}
For any broad $\upgamma \in {\mb Z}_r$, the map $H_{n_\upgamma}\left( X_{0, \upgamma}, Q_\upgamma^a \right) \to H_{n_\upgamma-1} \left( Q_\upgamma^a \right)$ is an isomorphism. In other words, the middle dimensional homology of $Q_\upgamma^a$ is generated by vanishing cycles.
\end{hyp}
This hypothesis is clearly satisfied by quasihomogeneous polynomials on ${\mb C}^N$. 

\subsubsection*{Intersection pairing}

We need a natural perfect pairing between broad states in order to define the correlation function. For $M>>0$ sufficient large, denote
\begin{align*}
Q_\upgamma^\infty:= \left( {\rm Re} Q_\upgamma \right)^{-1} \left( [M, +\infty ) \right),\ Q_\upgamma^{-\infty}:= \left( {\rm Re} Q_\upgamma \right)^{-1} \left( (-\infty, -M]\right).
\end{align*}
Then we have a perfect pairing
\begin{align}\label{equation21}
H_{n_\upgamma} \left( X_{0,\upgamma}, Q_\upgamma^{-\infty} \right) \otimes H_{n_\upgamma} \left( X_{0, \upgamma}, Q_\upgamma^{+\infty} \right) \to {\mb Z}.
\end{align}
This pairing is described in \cite[Page 36]{FJR2} in the case of quasihomogeneous polynomials on ${\mb C}^N$ but for exactly the same reason we have it for the more general case. 

On the other hand, via the parallel transport, we have canonical isomorphisms
\begin{align}\label{equation22}
H_{n_\upgamma}\left( X_{0, \upgamma}, Q_\upgamma^a \right)^{{\mb Z}_r} \simeq H_{n_\upgamma} \left( X_{0, \upgamma}, Q_\upgamma^{\pm\infty} \right)^{{\mb Z}_r}.
\end{align}
Moreover, choose $\xi \in S^1$ such that $\xi^r = -1$. $\xi$ implies an isomorphism
\begin{align}\label{equation23}
H_{n_\upgamma} \left( X_{0, \upgamma}, Q_\upgamma^\infty \right)^{{\mb Z}_r} \to H_{n_\upgamma} \left( X_{0, \upgamma}, Q_\upgamma^{-\infty} \right)^{{\mb Z}_r},
\end{align}
which is independence of the choice of $\xi$ because we have restricted to the monodromy invariant part. Then by Hypothesis \ref{hyp27} and (\ref{equation21})--(\ref{equation23}), we have a perfect pairing
\begin{align*}
H_{n_\upgamma-1} \left( Q_\upgamma^a \right)^{{\mb Z}_r} \otimes H_{n_\upgamma-1} \left( Q_\upgamma^a \right)^{{\mb Z}_r}  \to {\mb Z}.
\end{align*}
Therefore, by the duality between homology and cohomology we have a canonical identification
\begin{align}\label{equation24}
{\ms H}_\upgamma \simeq H_{n_\upgamma-1} \left( Q_\upgamma^a; {\mb Q} \right)^{{\mb Z}_r}.
\end{align}

\subsubsection*{$\infty$-relative cycles in $Q_\upgamma^a$}

The use of Lagrange multiplier requires us to consider the complex Morse theory of the hypersurfaces $Q_\upgamma^a$. If we have a holomorphic Morse function $F$ defined on $Q_\upgamma^a$, then critical points of $F$ (together with rays emitting from it) represent certain $\infty$-relative cycles. We will use the intersection between compact cycles and $\infty$-relative cycles, which is described as follows. For any compact subset $K \subset X$, we can consider the relative homology $H_* \left( Q_\upgamma^a, Q_\upgamma^a \setminus K\right)$. The inverse limit with respect to the direct system of compact subsets under inclusion is denoted by 
\begin{align*}
H_* \left( Q_\upgamma^a, \infty \right).
\end{align*}
This is the dual space of $H^*_c\left( Q_\upgamma^a \right)$. Then we have the intersection pairing
\begin{align}\label{equation25}
\cap: H_* \left( Q_\upgamma^a \right) \otimes H_* \left( Q_\upgamma^a, \infty \right) \to {\mb Z}.
\end{align}


\begin{rem}
There is certain enrichment of the GLSM state space. Take $X = X_0 \times {\mb C}$ with an additional $K:= S^1$-action by
\begin{align*}
e^{{\bm i} \theta} (x, p) = ( e^{{\bm i} \theta}x, e^{- {\bm i} r \theta} p ).
\end{align*}
We take $W: X\to {\mb C}$ to be $W(x, p) = p Q(x)$, which is invariant under the $K$-action. Then for $\upgamma \in {\mb Z}_r$ and any $a \in {\mb C}^*$, denote $W_\upgamma^a = W_\upgamma^{-1}(a)$. The {\it enriched} $\upgamma$-sector of GLSM state space is defined by 
\begin{align*}
\wt{\ms H}_\upgamma:= H^*_K \left( X_\upgamma, W_\upgamma^a; {\mb Q} \right)^{{\mb Z}_r}.
\end{align*}
Here $(X_\upgamma, W_\upgamma^a)$ has the $K$-action and ${\mb Z}_r$-action commuting with each other. The enriched GLSM state space is defined as
\begin{align*}
\wt{\ms H}_Q:= \bigoplus_{\upgamma \in {\mb Z}_r} \wt{\ms H}_\upgamma.
\end{align*}
We see that formally the enriched state space generalizes the state space used in \cite{FJR2} (when $K$ is trivial), and the equivariant cohomology used in gauged Gromov-Witten theory (when $W \equiv 0$ and the ${\mb Z}_r$-action can be ignored) (see \cite{Cieliebak_Gaio_Mundet_Salamon_2002}). A more comprehensive correlation function can be defined over $\wt{\ms H}_Q$. 
\end{rem}

\section{Definition of the correlation function}\label{section3}

In this section we define the correlation function, as a collection of ${\mb Q}$-valued multi-linear function on ${\ms H}_Q$. The definition depends on the construction of the virtual fundamental class of the moduli space of solutions to the perturbed gauged Witten equation. The construction will be provided in a separate paper.

\subsection{Perturbed gauged Witten equation}

We recall the set-up of perturbed Witten equation given in \cite{Tian_Xu}. 

Let $({\mc C}, {\mc L}, \upvarphi)$ be a smooth $r$-spin curve, with orbifold markings ${\bm z} = (z_1, \ldots, z_m)$. Here ${\mc C}$ is a smooth orbifold Riemann surface, with possible nontrivial orbifold structures only at $z_1, \ldots, z_m$; ${\mc L} \to {\mc C}$ is a holomorphic orbifold line bundle; $\upvarphi$ is an isomorphism of orbifold line bundles
\begin{align*}
\upvarphi: {\mc L}^{\otimes r} \to {\mc K}_{\log}:= {\mc K}_{\mc C} \otimes {\mc O}(z_1) \otimes \cdots \otimes {\mc O}(z_m).
\end{align*}
$r$-spin structures are labelled by 
\begin{align*}
\vec{\upgamma}:= \left( \upgamma_1, \ldots, \upgamma_m \right) \in ({\mb Z}_r)^m.
\end{align*}
$\upgamma_i$ is called the monodromy of the $r$-spin structure at the marking $z_i$. The notion of $\upgamma$ being narrow or broad has been defined in Section \ref{section2}. A marking $z_i$ is called broad or narrow if its monodromy $\upgamma_i$ is broad or narrow respectively. In the current situation, we only consider a fixed $r$-spin curve, and we assume that the first $b$ markings are broad and the last $n = m- b$ markings are narrow.

Let $\Sigma$ be the smooth Riemann surface underlying ${\mc C}$. ${\bm z}$ is regarded as punctures on $\Sigma$ and $\Sigma^*:= \Sigma \setminus {\bm z}$. For each marking $z_i$, we fix a local holomorphic coordinate $w$ on $\Sigma$ centered at $z_i$. We assume these coordinate patches are disjoint from each other. A rigidification of the $r$-spin structure at $z_i$ is a choice of an element $e_j \in {\mc L}|_{z_i}$ such that 
\begin{align*}
\upvarphi( e_j^{\otimes r}) = {dw \over w} \in {\mc K}_{\log}|_{z_i}.
\end{align*}
We fix rigidifications $\vec{\upphi}= ( \upphi_1, \ldots, \upphi_m )$ at all punctures. Now we fix
\begin{align*}
\vec{\mc C}:= \left( {\mc C}, {\mc L}, \upvarphi; \vec{\upphi} \right)
\end{align*}
as a rigidified $r$-spin curve, which is the domain of the gauged Witten equation.

The line bundle ${\mc L}$ descends to an ordinary line bundle $L \to \Sigma^*$. In \cite{Tian_Xu} we define the notion of adapted Hermitian metrics, which is a class of $W_{loc}^{2, p}$-Hermitian metrics on $L$ compatible with the $r$-spin structure. We choose a smooth adapted metric $H_0$ on $L$, and denote by $Q_0\to \Sigma^*$ the unit circle bundle. 

On the other hand, choose another $K= S^1$-principal bundle $Q_1 \to \Sigma$, whose restriction to $\Sigma^*$ is still denoted by $Q_1$. Then in \cite{Tian_Xu}, we considered a space ${\mz A}$ of $G= S^1 \times S^1$-connections on $Q:= Q_0 \times_{\Sigma^*} Q_1$. Moreover, ${\mz G}$ is the space of gauge transformations $g: \Sigma^* \to G$ of class $W_{loc}^{2, p}$ such that $g$ is asymptotic to the identity in a $W_\delta^{2, p}$-manner for some $\delta>0$ ($\delta$ is allowed to change). 

$G= S^1 \times K$ acts on $X$. Denote $Y:= Q \times_G X \to \Sigma^*$ the fibre bundle. The $r$-spin structure induces a family of lifting
\begin{align*}
{\mc W}_A \in \Gamma \left( Y, \pi^* K_{\log} \right),\ A\in {\mz A}.
\end{align*}
The vertical tangent bundle $T^\bot Y \to Y$ admits a natural Hermitian metric. With respect to this metric, we have the vertical gradient
\begin{align*}
\nabla {\mc W}_A \in \Gamma \left( Y, \pi^* \Omega_\Sigma^{0,1} \otimes T^\bot Y \right).
\end{align*}
Choose a biinvariant metric on the Lie algebra ${\mf g}$ and an area form $\Omega$ on $\Sigma$. The gauged Witten equation reads 
\begin{align}\label{equation31}
\left\{ \begin{array}{ccc}
\ov\partial_A u + \nabla {\mc W}_A (u) & = & 0,\\
* F_A + \mu(u) & = & 0.
\end{array} \right.
\end{align}

\subsubsection*{Perturbations}

Whenever there is broad punctures, the linearization of (\ref{equation31}) (modulo gauge transformation) is not a Fredholm operator in a natural way. We have to perturb the equation near broad punctures. A perturbation is described as
\begin{align*}
\vec{P} = \left( \vec{a}, \vec{F} \right) = \left( a_i, F_i \right)_{i=1}^b.
\end{align*}
Here for each $i$, $a_i \in {\mb C}^*$ and $F_i: X_0 \to {\mb C}$ is a holomorphic function. Denote
\begin{align*}
W_i:= W - a_i p + F_i.
\end{align*}

\begin{defn}
$F_i$ is {\bf $\upgamma_i$-admissible} if the following are satisfied.
\begin{enumerate}
\item[({\bf P1})] There exist $r_l \leq {1\over 2} r$ ($l = 2, \ldots, s$) such that 
\begin{align*}
F_i = \sum_{l=2}^s F_i^{(l)}
\end{align*}
and $F_i^{(l)}: X_0 \to {\mb C}$ is a holomorphic function of degree $r_l$.

\item[({\bf P2})] There exists $c_i >0$ such that for $l = 2, \ldots, s$
\begin{align*}
\left| F_i^{(l)} (x) \right| \leq c_i  \left( 1 + |\mu_+(x)| \right)^{1\over 2},\ \left| d F_i^{(l)} (x) \right| \leq c_i.
\end{align*}
\end{enumerate}

$P_i:= (a_i, F_i)$ is called {\bf $\upgamma_i$-regular} if $F_i$ is $\upgamma_i$-admissible and
\begin{enumerate}
\item[({\bf P3})] for every $\epsilon \in (0, 1]$, the restriction of $W_{i, \epsilon} = pQ - \epsilon^r a_i p + \epsilon^r F_i^\epsilon$ to $X_{\upgamma_i}$ is a holomorphic Morse function. Here $F_i^\epsilon (x) = F_i(\epsilon^{-1} x)$.

\item[({\bf P4})] The perturbed functions $W_{i, \epsilon}$ has no critical points at infinity in the following sense: for every $T>1$, there is a $G$-invariant compact subset $K_T\subset X$ and $\epsilon_T>0$ such that 
\begin{align*}
\epsilon \in \left[ T^{-1}, 1\right],\ \left| \nabla W_{i, \epsilon} (x, p) \right| \leq \epsilon_T \Longrightarrow (x, p) \in K_T. 
\end{align*}
\end{enumerate}

$P_i= (a_i, F_i)$ is called {\bf $\upgamma_i$-strongly regular} if it is $\upgamma_i$-regular and all critical values of the restriction of $W_i$ to $X_{\upgamma_i}$  have distinct imaginary parts. 

$\vec{P}= \left( \vec{a}, \vec{F}\right)$ is called regular (resp. strongly regular ) if $(a_i, F_i)$ is $\upgamma_i$-regular (resp. $\upgamma_i$-strongly regular) for each broad puncture $z_i$ whose monodromy is $\upgamma_i$.
\end{defn}

\begin{lemma}\label{lemma32}
If $(p, x)$ is a critical point of $W_i|_{X_{\upgamma_i}}$, then for every $\epsilon>0$, $(p, \epsilon x)$ is a critical point of $W_{i, \epsilon}|_{X_{\upgamma_i}}$ and 
\begin{align*}
W_{i, \epsilon}(p, \epsilon x) = \epsilon^r W_i(p, x).
\end{align*}
\end{lemma}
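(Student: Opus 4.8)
The plan is to realize both assertions through a single holomorphic rescaling of the $X_0$-factor that intertwines $W_i$ with $W_{i,\epsilon}$ up to the nonzero constant $\epsilon^r$. Fix $\epsilon>0$ and define
\begin{align*}
\sigma_\epsilon\colon X_{\upgamma_i}\longrightarrow X_{\upgamma_i},\qquad \sigma_\epsilon(p,x)=(p,\epsilon x),
\end{align*}
where $\epsilon x$ is the image of $x$ under the ${\mb C}^*$-action on $X_0$. The first thing I would check is that $\sigma_\epsilon$ is well defined: since the ${\mb C}^*$-action on $X_0$ is holomorphic and ${\mb C}^*$ is abelian, it commutes with the action of $\upgamma_i\in{\mb Z}_r\subset S^1$, so it preserves the fixed locus $X_{0,\upgamma_i}$; and since $\upgamma_i^r=1$, the $K$-action on $X=X_0\times{\mb C}$ restricted to $\upgamma_i$ leaves the ${\mb C}$-coordinate fixed, so $X_{\upgamma_i}=X_{0,\upgamma_i}\times{\mb C}$ and $\sigma_\epsilon$ is a self-map of $X_{\upgamma_i}$, indeed a biholomorphism with inverse $\sigma_{1/\epsilon}$.

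Next I would establish the functional identity $W_{i,\epsilon}\circ\sigma_\epsilon=\epsilon^r\,W_i$ on $X_{\upgamma_i}$ (in fact on all of $X$). This is a direct computation from the definitions:
\begin{align*}
W_{i,\epsilon}(p,\epsilon x)=p\,Q(\epsilon x)-\epsilon^r a_i p+\epsilon^r F_i^\epsilon(\epsilon x)=\epsilon^r\bigl(p\,Q(x)-a_i p+F_i(x)\bigr)=\epsilon^r W_i(p,x),
\end{align*}
where we used that $Q$ is homogeneous of degree $r$, so $Q(\epsilon x)=\epsilon^r Q(x)$, and that $F_i^\epsilon(\epsilon x)=F_i(\epsilon^{-1}\cdot\epsilon x)=F_i(x)$ (with $F_i\colon X_0\to{\mb C}$ globally defined, so no domain issue arises). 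Evaluating at $(p,x)$ is exactly the displayed equality in the lemma.

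Finally I would differentiate this identity along $X_{\upgamma_i}$. By the chain rule, for every $(p,x)\in X_{\upgamma_i}$,
\begin{align*}
d\bigl(W_{i,\epsilon}|_{X_{\upgamma_i}}\bigr)_{(p,\epsilon x)}\circ(d\sigma_\epsilon)_{(p,x)}=\epsilon^r\,d\bigl(W_i|_{X_{\upgamma_i}}\bigr)_{(p,x)},
\end{align*}
and since $(d\sigma_\epsilon)_{(p,x)}$ is a linear isomorphism and $\epsilon^r\neq0$, the left-hand differential vanishes precisely when the right-hand one does. Thus $(p,x)$ is a critical point of $W_i|_{X_{\upgamma_i}}$ if and only if $(p,\epsilon x)=\sigma_\epsilon(p,x)$ is a critical point of $W_{i,\epsilon}|_{X_{\upgamma_i}}$, which gives the asserted implication. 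I do not expect a genuine obstacle: the lemma is a bookkeeping consequence of the degree-$r$ homogeneity of $Q$. The only step deserving a word of care is the first one — checking that the rescaling $\sigma_\epsilon$ really restricts to a self-map of the fixed locus $X_{\upgamma_i}$ (equivalently, that taking the $\upgamma_i$-fixed part is compatible with the ${\mb C}^*$-scaling), together with keeping straight the convention that only the $X_0$-variable is rescaled while the Lagrange multiplier $p$ is untouched.
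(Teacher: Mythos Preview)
Your proof is correct and follows essentially the same approach as the paper: both use the rescaling $(p,x)\mapsto(p,\epsilon x)$ together with the degree-$r$ homogeneity of $Q$ and the definition $F_i^\epsilon(x)=F_i(\epsilon^{-1}x)$. The paper verifies the critical-point equations $Q(x)=a_i$ and $p\,dQ+dF_i=0$ by hand and then computes the critical value, whereas you package both steps into the single functional identity $W_{i,\epsilon}\circ\sigma_\epsilon=\epsilon^r W_i$ and differentiate it, which is a slightly cleaner presentation of the same argument.
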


\begin{proof}
If $(p, x) \in {\rm Crit} W_i|_{X_{\upgamma_i}}$, then
\begin{align*}
Q(x) = a_i,\ p dQ(x) + dF_i (x) = 0.
\end{align*}
Then $Q(\epsilon x) = \epsilon^r a_i$. Regard $\epsilon$ as the diffeomorphism of $X_0$ by multiplying $\epsilon$. Then
\begin{align*}
\left( \epsilon^* \left( pdQ + \epsilon^r dF_i^\epsilon \right) \right)(x) = \epsilon^r \left( pdQ(x) + dF_i(x) \right) = 0.
\end{align*}
Therefore $(p, \epsilon x) \in W_{i, \epsilon} |_{X_{0, \upgamma_i}}$. Moreover, the critical value is 
\begin{align*}
W_{i, \epsilon}(p, \epsilon x) = \epsilon^r F_i^\epsilon (\epsilon x) = \epsilon^r F_i(x) =  \epsilon^r W_i(p, x).
\end{align*}
\end{proof}

In applications, we may assume that certain class of $\upgamma_i$-admissible perturbations form a finite dimensional, nonzero complex vector space. This is the case when we consider the GLSM for a quasihomogeneous polynomial $Q: {\mb C}^N \to {\mb C}$, where the space of $F$'s is the space of all linear functions on ${\mb C}^N$. So we assume the following conditions.
\begin{hyp}\label{hyp33}
The space of all holomorphic functions $F: X_0 \to {\mb C}$ satisfying ({\bf P1}) and ({\bf P2}) is a finite dimensional nonzero complex vectors pace $V_F$. For fixed $a_i\in {\mb C}^*$, there is an analytic subset $V_F^{sing}(a_i)\subset V_F$ such that for every $F_i \in V_F \setminus V_F^{sing}(a_i)$, $(a_i, F_i)$ is $\upgamma_i$-regular. 
\end{hyp}

Suppose $P_i:= (a_i, F_i)$ is $\upgamma_i$-regular. Then there is a one-to-one correspondence between critical points of $F_i|_{Q_{\upgamma_i}^{a_i}}$ and critical points of $W_i|_{X_{\upgamma_i}}$. We use both of the two perspectives. A critical point is denoted by $\upkappa_i$. Moreover, critical points of $W_{i, \epsilon}|_{X_{\upgamma_i}}$ exist in smooth families parametrized by $\epsilon \in (0, 1]$. We also use $\upkappa_i$ to denote a family $\upkappa_i(\epsilon)$ of critical points of $W_{i, \epsilon}|_{X_{\upgamma_i}}$. We denote ${\rm Crit} W_{P_i}$ the set of all such families, which is a finite set. Denote
\begin{align*}
{\rm Crit} W_{\vec{P}} = \prod_{i=1}^b {\rm Crit} W_{P_i}.
\end{align*}
An element of it is denoted by $\vec{\upkappa}= (\upkappa_1, \ldots, \upkappa_b)$.

In this section, we fix a strongly regular perturbation $\vec{P} = (\vec{a}, \vec{F})$. As in \cite{Tian_Xu}, by choosing a cut-off function $\beta: \Sigma^* \to [0,1]$ supported near all broad punctures, we can lift the perturbation to $Y$. The lifting depends on the connection $A$, as well as choices of frames at broad punctures of $Q_1 \to \Sigma$. Denote by
\begin{align*}
\vec\uppsi:= \left( \uppsi_1, \ldots, \uppsi_b\right): K^b \to Q_1|_{\{z_1, \ldots, z_b\}}
\end{align*}
a choice of frames. The perturbed family of superpotentials is denoted by
\begin{align*}
\wt{\mc W}_A^{\vec\uppsi}\in \Gamma \left( Y, \pi^* K_{\log} \right).
\end{align*}

The perturbed gauged Witten equation is the following one on triples $(A, u, \vec{\uppsi})$
\begin{align}\label{equation32}
\left\{ \begin{array}{ccc}
\ov\partial_A u + \nabla \wt{\mc W}_A^{\vec{\uppsi}}(u) & = & 0;\\
 * F_A + \mu (u) & = & 0.
\end{array} \right.
\end{align}
This equation transform naturally under the action of the group of gauge transformations ${\mz G}$. 

The energy of pairs $(A,u)$ is defined as
\begin{align*}
E(A, u) = {1\over 2} \left( \left\| d_A u \right\|_{L^2}^2 + \left\| F_A \right\|_{L^2}^2 + \left\| \mu(u) \right\|_{L^2}^2 \right) + \left\| \nabla \wt{\mc W}_A^{\vec{\uppsi}}(u) \right\|_{L^2}^2
\end{align*}
where the norms are taken with respect to the metric on $\Sigma^*$ determined by $\Omega$ and the complex structure.

We summarize the main results of \cite{Tian_Xu} in the following theorem.
\begin{thm}
\begin{enumerate}
\item For any solution $(A, u)$ to (\ref{equation32}) with finite energy and $\left| \mu(u) \right|$ bounded on $\Sigma^*$ (such solutions are called bounded solutions), there exists $\upkappa_i \in X_{\upgamma_i}$ such that with respect to certain trivialization of $Y$ near $z_i$, 
\begin{align*}
\lim_{z \to z_i} u(z) = \upkappa_i.
\end{align*}
Moreover, if $\upgamma_i$ is broad, then $\upkappa_i\in {\rm Crit} \left( W_{i, \epsilon}|_{X_{\upgamma_i}} \right)$ for some $\epsilon \in (0, 1]$.

\item Any bounded solution defines a homology class $[A, u] \in \Gamma_X^G:= H_G^2 \left(X; {\mb Z}[r^{-1}] \right)$. There exists a function $E: \Gamma_X^G \to {\mb R}_+$ such that for every bounded solution $(A, u)$ to (\ref{equation32}) with $[A, u]= B \in \Gamma_X^G$, we have
\begin{align*}
E(A, u) \leq E(B).
\end{align*}

\item For every $E$, the moduli space of gauge equivalence classes of bounded solutions $(A, u)$ to (\ref{equation32}) satisfying $E(A, u) \leq E$ is compact up to degeneration of solitons at broad punctures. In particular, if the perturbation $\vec{P}$ is strongly regular, then the moduli space itself is compact.
\end{enumerate}
\end{thm}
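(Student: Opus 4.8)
The statement collects three essentially independent analytic facts --- a removable-singularity/asymptotic result at the punctures, a topological energy bound, and a compactness theorem --- and the plan is to prove them in that order, the last being the substantial one.

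For (1), I would work on a cylindrical end $[0,\infty)\times S^1$ adapted to the puncture $z_i$, on which $\Omega$ is asymptotic to the flat form. The finite-energy hypothesis gives $\int_{[T,\infty)\times S^1}e(A,u)\to 0$ as $T\to\infty$. The first step is a sub-mean-value inequality for the energy density, derived from a Bochner--Weitzenb\"ock identity for (\ref{equation32}) and ellipticity of the linearized operator modulo gauge, which upgrades the integral decay to pointwise exponential decay of $e(A,u)$ on the end. Next, using that $|\mu(u)|$ is bounded and that $|\nabla\wt{\mc W}_A^{\vec\uppsi}|$ is bounded below outside a neighborhood of the $G$-fixed locus, I would show $u$ converges in $C^0$ onto the fixed fibre $Q\times_G X_{\upgamma_i}$ over $z_i$. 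On that fibre the equation reduces to a (perturbed) gradient-flow equation for the restricted superpotential, which for narrow $\upgamma_i$ has only $\bigstar$ as critical point, and for broad $\upgamma_i$, after undoing the $\epsilon$-rescaling of Lemma \ref{lemma32}, is the holomorphic Morse function $W_{i,\epsilon}|_{X_{\upgamma_i}}$; a {\L}ojasiewicz--Simon estimate (or, in the Morse case, a spectral-gap argument in the style of Robbin--Salamon) then forces convergence to a single critical point $\upkappa_i$ with exponential rate.

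For (2), the class $B=[A,u]\in H^2_G(X;{\mb Z}[r^{-1}])$ is obtained by pairing the equivariant homotopy class of the section $u$ of $Y\to\Sigma^*$ with its prescribed asymptotics from (1), the $[r^{-1}]$ coefficients reflecting the orbifold structure at the markings. For the bound I would complete the square in $E(A,u)$ using (\ref{equation32}): the energy rewrites as a topological term (the pairing of $B$ with a fixed equivariant class assembled from $[\omega]$, $\mu$, and the first Chern classes of the bundles involved), plus contributions localized at the punctures depending only on the critical values $W_i(\upkappa_i)$, plus a manifestly nonnegative remainder that vanishes precisely on solutions. Discarding the remainder, and noting that for fixed asymptotic data only finitely many asymptotic homotopy types occur, produces the function $E:\Gamma_X^G\to{\mb R}_+$.

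Part (3) carries the main difficulty. Given bounded solutions $(A_\nu,u_\nu)$ with $E(A_\nu,u_\nu)\le E$, the moment-map equation $*F_{A_\nu}+\mu(u_\nu)=0$ together with the bound on $|\mu(u_\nu)|$ controls the curvature; after Uhlenbeck-type gauge fixing and $\epsilon$-regularity for (\ref{equation32}) one obtains $C^\infty_{loc}$ bounds away from finitely many energy-concentration points. Interior concentration produces holomorphic sphere bubbles in $X$ (or affine vortices over ${\mb C}$), reattached by a removable-singularity argument, so no energy is lost over the interior. The genuinely new phenomenon --- and the hard part --- occurs on the cylindrical ends at broad punctures: because the perturbation $W_{i,\epsilon}$ is effective at a scale set by $\epsilon$, energy can concentrate ``at infinity'' along such an end and, after rescaling, converge to a \emph{soliton}, a finite-energy solution of the perturbed Witten equation on ${\mb R}\times S^1$ joining two critical points of $W_{i,\epsilon}|_{X_{\upgamma_i}}$ --- exactly the degeneration named in the statement. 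To exclude it under strong regularity, I would observe that the soliton equation is the negative gradient flow of ${\rm Re}\,W_{i,\epsilon}$ on $X_{\upgamma_i}$ and that this flow preserves ${\rm Im}\,W_{i,\epsilon}$ (for holomorphic $W$ one has ${\rm grad}\,{\rm Re}\,W=J\,{\rm grad}\,{\rm Im}\,W$, hence $d({\rm Im}\,W)({\rm grad}\,{\rm Re}\,W)=0$); thus a nonconstant soliton would join critical points with equal imaginary parts of the critical value, which strong regularity forbids. With solitons excluded, no energy escapes along the ends, and the local elliptic bounds give sequential compactness; the one remaining check is that the weighted Sobolev/gauge framework of ${\mz A}$ and ${\mz G}$ survives in the limit, which follows from the exponential convergence established in (1).
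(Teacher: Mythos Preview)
The paper does not prove this theorem. Immediately preceding the statement the authors write ``We summarize the main results of \cite{Tian_Xu} in the following theorem,'' and no argument is given afterward; all three parts are imported wholesale from the companion paper. So there is no proof here to compare your proposal against.

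That said, your sketch is a reasonable outline of how such results are obtained and is consistent with what the present paper says elsewhere. In particular, your mechanism for excluding solitons under strong regularity --- that the gradient flow of ${\rm Re}\,W$ preserves ${\rm Im}\,W$, so a nonconstant soliton must connect critical points with equal imaginary parts of the critical value --- is exactly the principle the authors invoke later in Section~\ref{section4} (see (\ref{equation42})). One point to be careful about: the solitons that actually arise in the compactness analysis are trajectories in $X_{\upgamma_i}$ for the ambient function $W_{i,\epsilon}$, not for $F_i|_{Q_{\upgamma_i}^{a_i}}$; the paper notes (end of Section~\ref{section4}) that an adiabatic-limit argument in the style of \cite{Lagrange_multiplier} is needed to pass between the two pictures, which your sketch elides. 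Beyond that, the substance of parts (1)--(3) lives in \cite{Tian_Xu}, not here.
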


Therefore, for any $B \in \Gamma_X^G$ and $\vec\upkappa  = (\upkappa_1, \ldots, \upkappa_b) \in {\rm Crit} W_{\vec{P}}$, denote by 
\begin{align}\label{equation33}
{\mc M}_{\vec{P}}\left( \vec{\mc C}, B, \vec\upkappa \right)
\end{align}
the moduli space of gauge equivalence classes of solutions to (\ref{equation32}) which represent the class $B \in \Gamma_X^G$ and such that for each broad puncture $z_i$, $i = 1, \ldots, b$, the limit of $u$ at $z_i$ belongs to $\upkappa_i$. We say that such solutions have asymptotics {\it prescribed} by $\vec\upkappa$. Then in \cite{Tian_Xu}, we proved that ${\mc M}_{\vec{P}}\left( \vec{\mc C}, B, \vec\upkappa \right)$ is the zero locus of a Fredholm section of certain Banach space bundle ${\mc E}$ over some Banach manifold ${\mc B}$. Moreover, the index of the Fredholm section is given by
\begin{align}\label{equation34}
\chi\left( \vec{\mc C}, B \right) = (2-2g) {\rm dim}_{\mb C} X_0 + 2 \left( c_1^G (B) -  \sum_{j=1}^m \Theta_j \right)- \sum_{j=1}^b {\rm dim}_{\mb C} X_{0, \upgamma_i}.
\end{align}
Here $c_1^G$ is the $G= S^1 \times S^1$-equivariant first Chern class of $X$, $\Theta_j \in {\mb Q}$ corresponds to certain degree shifting in Chen-Ruan cohomology. We remark that in the case of quasihomogeneous polynomials on ${\mb C}^N$, the above index coincides with the Fredholm index of the Witten equation calculated in \cite[Section 5]{FJR3}.

\subsection{The correlation function}

The correlation function we considered is a collection of multi-linear maps
\begin{align}\label{equation35}
\left\langle \ \cdot, \cdots, \cdot\ \right\rangle_{\vec{\mc C}}^B: \bigotimes_{i=1}^n {\ms H}_{\upgamma_i} \to {\mb Q},\ B \in \Gamma_X^G.
\end{align}
We can extend it trivially to a multi-linear map
\begin{align*}
\left\langle \ \cdot, \cdots, \cdot\ \right\rangle_{\vec{\mc C}}^B: \bigotimes_{i=1}^n {\ms H}_Q \to {\mb Q}.
\end{align*}
To define (\ref{equation35}), we take a strongly regular perturbation $\vec{P}= \left( \vec{a}, \vec{F} \right)$. Consider all possible combinations $\vec{\upkappa} = \left( \upkappa_1, \ldots, \upkappa_b \right) \in {\rm Crit} W_{\vec{P}}$ and the moduli space (\ref{equation33}). We claim 
\begin{thm}\cite{Tian_Xu_3}\label{thm35}
If $\vec{P}$ is strongly regular, then there exists a virtual fundamental class 
\begin{align*}
\left[ {\mc M}_{\vec{P}} \left( \vec{\mc C}, B, \vec{\upkappa} \right) \right]^{vir} \in H_{\chi\left( \vec{\mc C}, B \right)} \left( {\mc M}_{\vec{P}} \left( \vec{\mc C}, B, \vec{\upkappa} \right); {\mb Q}\right)
\end{align*}
\end{thm}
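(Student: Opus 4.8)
The plan is to construct the virtual fundamental class by the standard machinery of Kuranishi structures (following \cite{Fukaya_Ono}) or virtual neighborhoods (following \cite{Li_Tian}), applied to the Fredholm setup already established in \cite{Tian_Xu}. Recall from the summarized theorem that ${\mc M}_{\vec P}(\vec{\mc C}, B, \vec\upkappa)$ is the zero locus of a Fredholm section $\mc S$ of a Banach space bundle $\mc E \to \mc B$ with index equal to $\chi(\vec{\mc C}, B)$, and that by the compactness statement (part (3), using strong regularity of $\vec P$) this zero locus is compact. These are precisely the two hypotheses — a compact zero set of a Fredholm section of a Banach bundle — under which the abstract perturbation theory produces a rational homology class of the expected dimension.

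First I would build local Kuranishi charts. Around each gauge equivalence class $[A, u] \in {\mc M}_{\vec P}(\vec{\mc C}, B, \vec\upkappa)$ one needs a finite-dimensional model: choose a finite-dimensional subspace $E_{[A,u]}$ of the fiber $\mc E_{[A,u]}$ that, together with the image of the linearized operator $D\mc S_{[A,u]}$, spans $\mc E_{[A,u]}$ (possible because $D\mc S$ is Fredholm, hence has closed finite-codimensional range); pull this back to a neighborhood using a suitable trivialization or parallel transport; then the zero set of the induced section into $\mc E / E_{[A,u]}$ is, by the implicit function theorem, a finite-dimensional manifold $V_{[A,u]}$ carrying an obstruction bundle $E_{[A,u]} \to V_{[A,u]}$ whose virtual dimension is $\operatorname{ind} D\mc S = \chi(\vec{\mc C}, B)$. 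Here one must be careful about the isotropy group of $[A,u]$ coming from the $r$-spin orbifold structure and from stabilizers of the gauge action; the charts should be taken equivariantly with respect to a finite group, so the Kuranishi neighborhoods are of the form $(V/\Gamma, E, \mc S, \psi)$. Second, by compactness of ${\mc M}_{\vec P}(\vec{\mc C}, B, \vec\upkappa)$ one extracts a finite cover by such charts and checks the cocycle/compatibility conditions on overlaps (coordinate changes embedding smaller charts into larger ones, intertwining the obstruction bundles); this assembles a Kuranishi structure of dimension $\chi(\vec{\mc C}, B)$. Third, one perturbs the Kuranishi maps multi-sectionally to achieve transversality while preserving compatibility, and the zero set of the perturbed multisection is a compact weighted branched manifold of dimension $\chi(\vec{\mc C}, B)$ carrying a rational fundamental cycle; its image in ${\mc M}_{\vec P}(\vec{\mc C}, B, \vec\upkappa)$ under the chart maps $\psi$ defines $[{\mc M}_{\vec P}(\vec{\mc C}, B, \vec\upkappa)]^{vir}$, and a partition-of-unity argument shows independence of the perturbation.

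I expect the main obstacle to be not the abstract homological algebra of Kuranishi structures — which is by now standard — but the analytic input needed to make the charts genuinely work in this gauged, non-compact Landau--Ginzburg setting: namely, establishing the appropriate Fredholm and elliptic regularity package uniformly near the broad punctures. One must work in weighted Sobolev spaces $W^{k,p}_\delta$ adapted to the exponential decay of $u$ toward the critical points $\upkappa_i$ of $W_{i,\epsilon}|_{X_{\upgamma_i}}$, confirm that the linearized operator is Fredholm of the stated index in these spaces (this is where the perturbation conditions ({\bf P1})--({\bf P4}) and strong regularity enter), ensure the gauge-fixing (local slice) is compatible with the decay, and verify that the obstruction-space trivializations and coordinate changes can be chosen to respect the asymptotic decay rate $\delta$ (which, as noted in the excerpt, is allowed to vary). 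Since the detailed construction is deferred to \cite{Tian_Xu_3}, the proof here would proceed at the level of citing \cite{Fukaya_Ono} and \cite{Li_Tian} for the general mechanism and \cite{Tian_Xu} for the specific Fredholm theory and compactness, and then checking that the hypotheses of the general mechanism are met; the only genuinely new point to argue carefully is the behavior of everything at the broad punctures, where the non-Fredholm phenomenon before perturbation forces the weighted-space formalism.
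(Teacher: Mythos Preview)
Your outline is consistent with what the paper indicates, but note that the paper itself does not prove this theorem: Theorem~\ref{thm35} is stated as a claim with the citation \cite{Tian_Xu_3} and the introduction explicitly says the virtual fundamental class ``can be constructed by known techniques, such as the techniques developed in \cite{Fukaya_Ono} and \cite{Li_Tian}. It will be done in the incoming paper \cite{Tian_Xu_3}.'' So there is no proof in the present paper to compare against; your proposal is essentially a sketch of what that deferred construction should contain, and it matches the approach the authors announce.
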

So we have the virtual counts $\# {\mc M}_{\vec{P}} \left( \vec{\mc C}, B, \vec\upkappa \right)\in {\mb Q}$, which is zero if $\chi(\vec{\mc C}, B) \neq 0$. Certain linear combination of the virtual numbers gives the correlation function. The coefficients of the linear combination are described as follows.

Consider the negative  gradient flow of the real part of $F_i$ restricted to $Q_{\upgamma_i}^{a_i} \subset X_{0, \upgamma_i}$, whose equilibria are all the $\upkappa_i$'s. Abbreviate $n_i= n_{\upgamma_i}$. Denote by 	
\begin{align*}
\left[ \upkappa_i^- \right] \in H_{n_i -1} \left( Q_{\upgamma_i}^{a_i}, F_i^{-\infty} \right)\ \left({\rm resp.}\ \left[ \upkappa_i^+ \right] \in H_{n_i -1} \left( Q_{\upgamma_i}^{a_i}, F_i^{\infty}\right) \right)
\end{align*}
the class of the unstable (resp. stable) manifold of this flow. Here 
\begin{align*}
F_i^{\infty}= Q_{\upgamma_i}^{a_i} \cap \left( {\rm Re} F_i \right)^{-1} \left( [M, +\infty) \right),\ F_i^{-\infty}= Q_{\upgamma_i}^{a_i} \cap \left( {\rm Re} F_i \right)^{-1} \left( (-\infty, -M] \right)
\end{align*}
for some $M>>0$. We still use $\left[ \upkappa_i^\pm \right]$ to denote their images under the map
\begin{align*}
H_{n_i - 1} \left( Q_{\upgamma_i}^{a_i}, F_i^{\pm\infty} \right) \to H_{n_i-1} \left( Q_{\upgamma_i}^{a_i}, \infty \right).
\end{align*}
To define (\ref{equation35}), we choose the last $n$ inputs (narrow states) to be the generators of the corresponding sectors $\theta_i= e_{\upgamma_i} \in {\ms H}_{\upgamma_i}$, $i = b+1, \ldots, b+n$. Suppose the first $b$ inputs (the broad states) are $\theta_i\in {\ms H}_{\upgamma_i}$, $i = 1, \ldots, b$. Then define
\begin{align}\label{equation36}
\left\langle \theta_1, \ldots, \theta_b, \theta_{b+1}, \ldots, \theta_m \right\rangle_{\vec{\mc C}}^B: = \sum_{\vec{\upkappa}} \# {\mc M}\left( \vec{\mc C}, B, \vec\upkappa \right) \prod_{i=1}^b \theta_i^* \cap \left[ \upkappa_i^- \right].
\end{align}
Here $\theta_i^*$ is the image of $\theta_i$ under (\ref{equation24}) and the $\cap$ is the intersection mentioned in (\ref{equation25}). In general (\ref{equation35}) is defined by taking linear extension of the above values.

\begin{rem}
In the future we would like to define descendant version of the correlation function. For this purpose we have to consider the variation of complex structures of the domain $\vec{\mc C}$. The moduli space of genus $g$, $m$-marked stable rigidified $r$-spin curve is a branched cover
\begin{align*}
\ov{\mc M}_{g, m}^r \to \ov{\mc M}_{g, m}
\end{align*}
over the Deligne-Mumford space (see \cite[Section 2]{FJR2}). We can consider the universal moduli space 
\begin{align*}
{\mc M}_{g, m}\left( B, \vec{\upkappa} \right)
\end{align*}
consists of gauge equivalence classes of solutions to all smooth rigidified $r$-spin curve of genus $g$ and $m$-marked points. We have to prove an extension of the compactness theorem of \cite{Tian_Xu} in which one allows the complex structure of the domain to vary and degenerate. In particular, when the complex structure degenerates, near the forming node the area form used for the vortex equation is exponentially small (in cylindrical coordinates); then we will be in a situation similar to what is considered in \cite{Mundet_Tian_2009}. When a broad node is forming, we have to include a strongly regular perturbation nearby as did in \cite{FJR2}. Nevertheless, we assume the existence of a good compactification of ${\mc M}_{g, m} \left( B, \vec{\upkappa} \right)$, denoted by $\ov{\mc M}_{g, m} \left( B, \vec{\upkappa} \right)$. We assume that the compactification has a virtual fundamental class
\begin{align*}
\left[ \ov{\mc M}_{g, m}(B, \vec\upkappa) \right]^{vir}
\end{align*}
whose degree is $6g-6$ more than the index in (\ref{equation34}). Then by pulling back cohomology classes of the Deligne-Mumford space via the forgetful map, we can evaluate them against the above virtual fundamental class. So the descendant invariants are defined.
\end{rem}

\section{Invariance of the correlation function}\label{section4}

In this section we list the properties of the fundamental virtual class given in Theorem \ref{thm35} should have, which will imply that the correlation functions are independent of the strongly regular perturbation $\vec{P}$. In the scope of the current series we only have to consider zero or one dimensional moduli spaces, so the properties can be stated in terms of the virtual counts $\# {\mc M}_{\vec{P}} \left( \vec{\mc C}, B, \vec\upkappa \right)$.

We briefly describe our argument. Suppose we have two strongly regular perturbation $\vec{P}_1$ and $\vec{P}_2$. It suffices to consider the case that $\vec{P}_1$ and $\vec{P}_2$ only differ at one broad puncture. Therefore we omit the dependence on perturbations at other punctures and suppose at this puncture, the monodromy is $\upgamma \in {\mb Z}_r$ and the two $\upgamma$-strongly regular perturbations are $P_0 = (a_0, F_0)$ and $(a_1, F_1)$. Then in Subsection \ref{subsection41}, using a homotopy argument, we show that there is another $\upgamma$-strongly regular perturbation $(a_1, F_0')$ for which the correlation functions defined by $(a_0, F_0)$ and $(a_1, F_0')$ are equal (indeed the corresponding virtual counts are equal). Therefore it remains to consider the case that $P_0$ and $P_1$ only differ in $F$. Then in Subsection \ref{subsection42} we show that the correlation functions defined for different $F$'s are equal. This is more complicated than the case considered in Subsection \ref{subsection41} because certain wall-crossing may happen during a homotopy of the perturbations. 

We remark that both parts of the argument rely on constructing Kuranishi structures (with boundaries) on certain 1-dimensional moduli spaces parametrized by homotopies of the perturbation terms. The details are given in \cite{Tian_Xu_3}.

\subsection{Independence of $a$}\label{subsection41}

\subsubsection*{Independence of the axial part}

We consider a $\upgamma$-strongly regular perturbation $P = (a, F)$. For any $\lambda >0$, $P_\lambda := \left( \lambda^r a, \lambda^r F_\lambda \right)$ is also strongly regular (cf. Lemma \ref{lemma32}). Moreover, the variation of $\lambda$ gives a homotopy $(P_t)_{t\in [0,1]}$ between $P$ and $P_\lambda$ and each $P_t$ is a $\upgamma$-strongly regular perturbation. Let $\vec{P}_t$ be the path of strongly regular perturbations for which the perturbations at all other broad punctures are fixed. They for each $\vec\upkappa \in {\rm Crit} W_{\vec{P}}$, the homotopy produces a smooth family $\vec\upkappa_t\in {\rm Crit} W_{\vec{P}_t}$. We consider the universal moduli space parametrized by this homotopy, denoted by 
\begin{align*}
\cup_{t \in [0,1]} {\mc M}_{\vec{P}_t} \left( \vec{C}, B, \vec\upkappa_t \right).
\end{align*}
We can construct a Kuranishi structure with boundary on the above moduli space, where the boundary contributes to the difference of the correlation functions. Since each $\vec{P}_t$ is strongly regular, the oriented boundary is 
\begin{align*}
{\mc M}_{\vec{P}_1} \left( \vec{C}, B, \vec\upkappa_1 \right) - {\mc M}_{\vec{P}_0} \left( \vec{C}, B, \vec\upkappa_0 \right).
\end{align*}
So the correlation functions defined by $\vec{P}_0$ and $\vec{P}_1$ are equal.

\subsubsection*{Independence of the angular part} Now suppose $P = (a, F)$ is $\upgamma$-strongly regular. Then 
\begin{align*}
P' = \left( e^{{\bm i} \alpha} a, F \circ e^{-{{\bm i} \alpha\over r}} \right)
\end{align*}
is also $\upgamma$-strongly regular. Let $\vec{P}$ and $\vec{P}'$ be the two strongly regular perturbations we want to compare, which coincide for every other broad puncture except for $P$ and $P'$. Then for each $\vec\upkappa\in {\rm Crit} W_{\vec{P}}$, there is a corresponding $\vec\upkappa' \in {\rm Crit} W_{\vec{P}'}$. 

Choose a smooth gauge transformation $g^\alpha: \Sigma^* \to S^1 \subset G$ which is equal to $e^{{\bm i}{\alpha \over r}}$ near $z_1$ and equal to the identity away from a neighborhood of $z_1$. It is easy to see

\begin{lemma} For each $B \in \Gamma_X^G$ and each $\vec\upkappa\in {\rm Crti} W_{\vec{P}}$, the map $(A, u, \vec\uppsi)\mapsto ( ( g^\alpha)^* A, (g^\alpha)^* u, \uppsi)$ induces an orientation-preserving homeomorphism
\begin{align*}
{\mc M}_{\vec{P}} \left( \vec{\mc C}, B, \vec\upkappa \right)\to {\mc M}_{\vec{P}'}\left( \vec{\mc C}, B, \vec\upkappa'  \right).
\end{align*}
\end{lemma}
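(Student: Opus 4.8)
The plan is to exhibit an explicit gauge transformation that intertwines the two perturbed gauged Witten equations and then check that everything in the definition of the moduli space is carried over correctly.

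First I would unwind the relation between $\vec{P}$ and $\vec{P}'$. The only difference is at the puncture $z_1$, where $a$ is replaced by $e^{{\bm i}\alpha} a$ and $F$ by $F\circ e^{-{\bm i}\alpha/r}$; writing $W_1 = W - a_1 p + F_1$ one computes that precomposing with the diffeomorphism $x \mapsto e^{-{\bm i}\alpha/r} x$ of $X_0$ (which, combined with the $S^1$-factor acting trivially, is a $G$-action) sends $W_1$ to $W_1'$, using that $Q$ is homogeneous of degree $r$ so $Q(e^{-{\bm i}\alpha/r}x) = e^{-{\bm i}\alpha} Q(x)$. Hence the local rotation $e^{{\bm i}\alpha/r}$ in the $S^1$-factor of $G$ conjugates the superpotential data near $z_1$. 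The gauge transformation $g^\alpha: \Sigma^* \to S^1 \subset G$ is chosen to equal $e^{{\bm i}\alpha/r}$ near $z_1$ and the identity outside a neighborhood; since it is locally constant near the puncture it preserves the class of adapted metrics and does not disturb the asymptotic conditions, and since it is smooth and compactly supported in the ``interior'' region it lies in the relevant gauge group (or at least acts on the configuration space compatibly with ${\mz G}$).

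The key steps, in order, are: (1) verify that $(g^\alpha)^*$ carries a triple $(A,u,\vec\uppsi)$ satisfying the $\vec{P}$-perturbed equation (\ref{equation32}) to a triple satisfying the $\vec{P}'$-perturbed equation — here one uses that the gauged Witten equation is equivariant under gauge transformations, that the unperturbed part $\ov\partial_A u$, $*F_A + \mu(u)$ transforms tautologically, and that on the support of the perturbation near $z_1$ the effect of $g^\alpha$ on $\wt{\mc W}_A^{\vec\uppsi}$ is exactly the substitution $F \mapsto F\circ e^{-{\bm i}\alpha/r}$, $a\mapsto e^{{\bm i}\alpha} a$ by the homogeneity computation above; (2) check that the homology class $B \in \Gamma_X^G$ is preserved, which is immediate because $g^\alpha$ is homotopic to the identity through gauge transformations; (3) check that the asymptotic label is sent from $\vec\upkappa$ to the corresponding $\vec\upkappa'$, which follows from the one-to-one correspondence between $\mathrm{Crit}\,W_1|_{X_{\upgamma_1}}$ and $\mathrm{Crit}\,W_1'|_{X_{\upgamma_1}}$ induced by the same rotation, together with the fact that near $z_1$ the limit $\lim_{z\to z_1}u(z)$ gets rotated by $e^{{\bm i}\alpha/r}$; (4) note the map is a homeomorphism with inverse $(g^{-\alpha})^*$, and orientation-preserving because it comes from the action of a connected group and hence acts trivially on the determinant line of the Fredholm operator — equivalently, the construction of the virtual class / Kuranishi structure is $G$-equivariant.

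I expect the main obstacle to be step (1), and specifically the bookkeeping of how the cut-off-dependent lifted perturbation $\wt{\mc W}_A^{\vec\uppsi}$ on $Y = Q\times_G X$ transforms: one must track simultaneously the pullback of the connection $A$, the choice of frames $\vec\uppsi$ of $Q_1$ at the broad punctures (which is why $\uppsi$, not $(g^\alpha)^*\uppsi$, appears in the statement — the $S^1$-factor being rotated is the $Q_0$-factor coming from the $r$-spin structure, not the $Q_1$-factor), and the cut-off function $\beta$, and confirm that the net result is precisely the perturbation associated to $\vec{P}'$ with the \emph{same} frames $\vec\uppsi$ and the same $\beta$. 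Once this identity of perturbed superpotentials is pinned down, the remaining steps are formal consequences of gauge-equivariance of (\ref{equation32}) and of the index formula (\ref{equation34}), which is manifestly unchanged since $a$ and the coefficients of $F$ do not enter it.
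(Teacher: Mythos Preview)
Your proposal is correct and is precisely the natural verification; the paper itself gives no proof at all, merely prefacing the lemma with ``It is easy to see'', so your outline supplies exactly the omitted details. The one point worth emphasizing (which you do note) is that $g^\alpha$ is \emph{not} an element of the restricted gauge group $\mathcal{G}$, since it does not tend to the identity at $z_1$; it is a large gauge transformation that maps between two different moduli problems rather than acting within one, which is why it carries $\vec{P}$ to $\vec{P}'$ and $\vec\upkappa$ to $\vec\upkappa'$ instead of fixing them.
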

Therefore, the corresponding virtual counts are equal. Moreover, $e^{{{\bm i} \alpha \over r}}$ induces a biholomorphism $Q_{\upgamma}^a\simeq Q_\upgamma^{e^{{\bm i} \alpha}a}$. The induced isomorphism 
\begin{align*}
\wt{H}_* \left( Q_\upgamma^a \right) \simeq \wt{H}_* \left( Q_\upgamma^{e^{{\bm i} \alpha} a} \right)
\end{align*}
is compatible with the isomorphisms (\ref{equation24}) for $a$ and $e^{{\bm i} \alpha} a$. $e^{{\bm i}\alpha \over r}$ also induces a one-to-one correspondence between the $\infty$-relative cycles in $Q_\upgamma^a$ and $Q_\upgamma^{e^{{\bm i} \alpha} a}$. Therefore, the coefficients in the linear combinations defining the correlation function are invariant. Therefore, the correlation functions defined for $\vec{P}$ and $\vec{P}'$ are equal.

\subsection{Independent of the choice of strongly regular $F$}\label{subsection42}

Now we need to compare two strongly regular perturbations which only differ at one broad puncture as $(a, F^0)$ vs. $(a, F^1)$. A generic homotopy $F^t$ connecting $F^0$ and $F^1$ may not be always strongly regular. For certain values of $t$ where the strong regularity is lost, wall-crossing happens. We first discuss the wall-crossing phenomenon in a general case.

\subsubsection*{BPS solitons and intersection of vanishing cycles}

Let $M$ be a noncompact K\"ahler manifold of complex dimension $m$ and $F: M \to {\mb C}$ be a holomorphic Morse function, which has finitely many critical points, listed as $\upkappa_1, \ldots, \upkappa_s$. If ${\rm Im} F(\upkappa_i)$ are distinct, we say that $F$ is strongly regular. In this case the unstable manifold of $\upkappa_i$ under the negative gradient flow of ${\rm Re} F$ defines a relative cycle 
\begin{align*}
\left[ \upkappa_i^- \right] \in H_m \left( M, F^{-\infty} \right).
\end{align*}
More generally, if we have a path $\gamma$ connecting $F(\upkappa_i)$ with a regular value $a$ of $F$ such that the path avoids singular values except $F(\upkappa_i)$, then there is a well-defined vanishing cycle
\begin{align*}
\partial \left[ \upkappa_i^\gamma \right] \in H_{m-1} \left( F^a \right),
\end{align*}
which only depends on the homotopy class of such paths.

Now suppose we have a homotopy $F^\upnu$, $\upnu\in [0,1]$ between two strongly regular holomorphic Morse functions $F^0, F^1$ such that $F^\upnu$ is a holomorphic Morse function for every $\upnu$. Then there are continuous curves $\upkappa_i^\upnu\in M$ such that 
\begin{align*}
\left\{ \upkappa_i^\upnu\ |\ i = 1, \ldots, s \right\} = {\rm Crit} F.
\end{align*}
On the other hand, there are canonical identifications
\begin{align*}
H_* \left( M, \left( F^0 \right)^{-\infty} \right) \simeq H_* \left( M,  \left( F^1 \right)^{-\infty} \right). 
\end{align*}
This is because the critical values of $F^\upnu$ are uniformly bounded. We denote the space in common as $H_* \left( M, F^{-\infty} \right)$. We would like to compare $\left[ \left( \upkappa_i^0 \right)^-\right]$ with $\left[ \left( \upkappa_i^1\right)^- \right]$ as elements of $H_* \left( M, F^{-\infty} \right)$. 

\begin{defn}
Suppose $F^0$ and $F^1$ are strongly regular holomorphic Morse functions on $M$ which are in the same path-connected components of the space of holomorphic Morse functions. A homotopy $F^\upnu$ ($\upnu \in [0,1]$) in the space of holomorphic Morse functions  is called {\bf strongly regular} if there exists $\upnu_1, \ldots, \upnu_k \in (0,1)$ such that
\begin{enumerate}
\item $F^\upnu$ is strongly regular for $\upnu \in [0,1]\setminus \{\upnu_1, \ldots, \upnu_k \}$. 

\item For each $j \in \{1, \ldots, k\}$, 
\begin{align*}
\# \left\{ {\rm Im} F^{\upnu_j} ( \upkappa_i^{\upnu_j})\ |\ i =1, \ldots, s \right\} = s-1,
\end{align*}
and there exist $i_j^-, i_j^+\in \{1, \ldots, s\}$, $\delta>0$ such that 
\begin{align*}
\forall \upnu \in ( \upnu_j- \delta, \upnu_j +\delta),\ {\rm Re} F^\upnu \left( \upkappa_{i_j^-}^\upnu \right) < {\rm Re} F^\upnu \left( \upkappa_{i_j^+}^\upnu \right),  
\end{align*}
and for $\upnu \in (\upnu_j - \delta, \upnu_j)$ and $\upnu \in (\upnu_j, \upnu_j + \delta)$, ${\rm Im} F^\upnu \left( \upkappa_{i_j^-}^\upnu \right) - {\rm Im} F^\upnu \left( \upkappa_{i_j^+}^\upnu \right)$ are of different signs.
\end{enumerate}

Each $\upnu_j$ is called a {\bf crossing} in this homotopy and we say that this crossing {\it happens} between $i_j^-$ and $i_j^+$. We say the crossing is positive (resp. negative), denoted by ${\rm sign} \upnu_j = 1$ (resp. ${\rm sign} \upnu_j = -1$), if the argument of $F^\upnu \left( \upkappa_{i_j^+}^\upnu \right) - F^\upnu\left( \upkappa_{i_j^-}^\upnu \right)$ rotates in the counterclockwise (resp. clockwise) direction as $\upnu$ moves from $\upnu_j - \delta$ to $\upnu_j + \delta$.
\end{defn}

It is easy to see that we can obtain a strongly regular homotopy by perturbation. Then to compare $\left[ \left( \upkappa_i^0 \right)^-\right]$ with $\left[ \left( \upkappa_i^1\right)^- \right]$, it suffices to consider the case that there is only one crossing at $\upnu = {1\over 2}$ in the homotopy (in the case of zero crossing, the two relative cycles are equal). In this case, we use $\wt{F}$ to denote the homotopy $\{F^\upnu\}$ and use $(-1)^{\wt{F}}$ to denote the sign of the only crossing. Suppose $i^-$ and $i^+$ are the two indices between which the crossing happens. Then we have the following Picard-Lefschetz formula (see \cite[Chapter 2]{Singularity_I}).
\begin{thm}\label{thm43}[Picard-Lefschetz]
For each $i \in \{1, \ldots, s\}$, we have
\begin{align}\label{equation41}
\left[ \left( \upkappa_i^1 \right)^- \right] - \left[ \left( \upkappa_i^0 \right)^- \right] = \left( -1 \right)^{\wt{F}} \delta_{i, i^+}  \left\langle \partial \left[ \upkappa_{i^+}^{\gamma_-} \right], \partial \left[  \upkappa_{i^-}^{\gamma_+} \right] \right\rangle_a \left[ \left( \upkappa_{i^-}\right)^-\right]
\end{align}
Here $a$ is the mid point of $F^{1\over 2} \left( \upkappa_{i^-} \right)$ and $F^{1\over 2} \left( \upkappa_{i^+} \right)$, $\gamma_+$ (resp. $\gamma_-$) is the straight path connecting $F^{1\over 2} \left( \upkappa_{i^-} \right)$ (resp. $F^{1\over 2} \left( \upkappa_{i^+} \right)$) to $a$, and $\langle \cdot, \cdot \rangle_a$ means the intersection pairing in $F^a$.
\end{thm}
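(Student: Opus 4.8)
The plan is to localize the comparison to a small neighbourhood of the crossing value and reduce it to the classical Picard--Lefschetz monodromy computation. First I would set $\upnu = \tfrac12$, write $F = F^{1/2}$, and let $c_- = F(\upkappa_{i^-})$, $c_+ = F(\upkappa_{i^+})$ be the two critical values that collide in imaginary part at the crossing, with $\mathrm{Re}\, c_- < \mathrm{Re}\, c_+$. For every $i \neq i^+$ the relative cycle $[(\upkappa_i^\upnu)^-]$ varies continuously with $\upnu$ in $H_*(M, F^{-\infty})$ across $\upnu = \tfrac12$: the unstable manifold of $\upkappa_i$ under $-\nabla\,\mathrm{Re}\,F^\upnu$ sweeps out a family of relative cycles, and since no other critical value has the same imaginary part as $F^\upnu(\upkappa_i)$ near $\upnu = \tfrac12$, the descending manifold flows cleanly down to $F^{-\infty}$ without interacting with other thimbles; hence the left-hand side of \eqref{equation41} vanishes for $i \neq i^+$, matching the Kronecker delta. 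So the whole content is the case $i = i^+$.

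For $i = i^+$, the second key step is to express $[(\upkappa_{i^+}^\upnu)^-]$ in terms of a basis of Lefschetz thimbles adapted to a fixed generic direction (say, the negative real direction). Just before the crossing, the projection of $c_+$ lies slightly below $c_-$ in imaginary part; just after, slightly above; meanwhile $\mathrm{Re}\, c_+ > \mathrm{Re}\, c_-$ throughout. The thimble from $\upkappa_{i^+}$ is the cone over the vanishing cycle $\partial[\upkappa_{i^+}^{\gamma}]$ transported along a path $\gamma$ that runs from $c_+$ out to $-\infty$ along (near) a horizontal ray; as $\upnu$ crosses $\tfrac12$ this path must be deformed past the critical value $c_-$, because $c_+$ swings from one side of the horizontal line through $c_-$ to the other. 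The change in the resulting relative homology class is governed by the monodromy of the local system $H_{m-1}(F^{\bullet})$ around $c_-$, i.e. by the classical Picard--Lefschetz operator. Concretely, deforming $\gamma$ past $c_-$ inserts a copy of the thimble of $\upkappa_{i^-}$ weighted by the intersection number in the nearby fiber $F^a$ of the vanishing cycle of $\upkappa_{i^+}$ (transported to $a$ via $\gamma_-$) with that of $\upkappa_{i^-}$ (transported via $\gamma_+$); the sign is dictated by which way $c_+$ goes around $c_-$, which is exactly $\mathrm{sign}\,\upnu_j = (-1)^{\wt F}$. This is the monodromy/variation statement of \cite[Chapter 2]{Singularity_I}, applied fiberwise to the proper map $F$ near the critical value $c_-$; here one uses that $F$ has isolated nondegenerate critical points and that only finitely many critical values are involved, so away from a compact set the relevant relative homology is controlled and the deformation of $\gamma$ is honest.

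The third step is bookkeeping: identify the ``mid point'' value $a$ and the straight segments $\gamma_\pm$ in the statement with the transport paths used above, and check that the intersection pairing $\langle \cdot, \cdot\rangle_a$ on $H_{m-1}(F^a)$ in the theorem is the one induced by these identifications — this is just unwinding definitions, using that parallel transport along short paths avoiding singular values is canonical up to homotopy. Finally, one concatenates: a general strongly regular homotopy has finitely many crossings $\upnu_1 < \cdots < \upnu_k$, and between consecutive crossings all relative cycles vary continuously, so \eqref{equation41} composes crossing-by-crossing to give the total change; but as stated we only need the single-crossing case.

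The main obstacle I expect is the noncompactness of $M$: the classical Picard--Lefschetz theorem is stated for (essentially) proper Lefschetz fibrations, whereas here $F^{-\infty}$ replaces a genuine fiber at infinity and the thimbles are noncompact relative cycles. The care needed is to justify that the deformation of the transport path $\gamma$ past $c_-$ can be carried out within a region where $F$ behaves like a proper map — i.e. that there are no ``critical points at infinity'' of $F^\upnu$ obstructing the homotopy of $\gamma$ and of the associated relative cycles. In the application this is guaranteed by hypothesis (the admissibility/regularity conditions (\textbf{P2}), (\textbf{P4}) on the perturbations, which preclude escape of gradient trajectories to infinity), so the finite-dimensional local model of the classical theorem applies verbatim near each crossing value.
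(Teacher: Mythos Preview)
The paper does not prove this theorem at all: it is stated as the classical Picard--Lefschetz formula with a citation to \cite[Chapter~2]{Singularity_I}, and no argument is given. Your proposal is a correct sketch of the standard derivation (localize near the crossing, track the deformation of the thimble path past the other critical value, invoke the monodromy/variation operator), which is precisely what one would find in the cited reference; so there is nothing to compare beyond noting that you have supplied what the paper outsourced.

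One small remark: your discussion of the noncompactness obstacle and the appeal to ({\bf P2}), ({\bf P4}) is appropriate for the \emph{application} in Subsection~\ref{subsection42}, but Theorem~\ref{thm43} as stated is about an abstract holomorphic Morse function $F$ on a K\"ahler manifold $M$ with finitely many critical points; the paper implicitly takes for granted the local triviality needed for the Picard--Lefschetz argument (as is standard in singularity theory), so strictly speaking that hypothesis is baked into the cited result rather than justified here.
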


The intersection number appeared in the Picard-Lefschetz formula can be intepreted as the number of BPS solitons. A BPS soliton is a nonconstant, finite energy solution $x: {\mb R} \to M$ to the ODE
\begin{align*}
x'(t) + \nabla F(x(t)) = 0.
\end{align*}
Here $\nabla F$ is the gradient of the real part of $F$. Then $\upkappa_\pm:= \displaystyle \lim_{t \to \pm\infty} x(t)$ are necessarily critical points of $F$ and 
\begin{align}\label{equation42}
{\rm Im} F(\upkappa_+) = {\rm Im} F(\upkappa_-),\ {\rm Re} F(\upkappa_+) < {\rm Re} F(\upkappa_-).
\end{align}
We identify two BPS solitons if they differ by a time translation. Then if (\ref{equation42}) is satisfied and other critical values of $F$ have different imaginary part, then the number of BPS solitons between $\upkappa_-$ and $\upkappa_+$ is finite and is equal to the intersection number appeared in (\ref{equation41}).

\subsubsection*{Wall-crossing formula for the virtual counts}

Now we consider two strongly regular perturbations $\vec{P}^\pm = \left( \vec{a}, \vec{F}^\pm \right)$ where $\vec{F}^\pm$ only differ at one broad puncture $z_1$ as $F^-_1$ and $F^+_1$, whose monodromy is denoted by $\upgamma_1$. We consider smooth homotopies which connect $F^-_1$ and $F^+_1$. By Hypothesis \ref{hyp33}, the space of $F$ for which $(a_1, F_1)$ is $\upgamma_1$-regular is path-connected. Therefore we can find a path $\wt{F}_1 = \left\{F_1^\upnu\right\}_{\upnu \in [-1, 1]}$ such that for each $\upnu\in [-1, 1]$, $(a_1, F_1^\upnu)$ is $\upgamma$-regular. Moreover, it suffices to consider the case that $\left( a_1, F_1^\upnu \right)$ is $\upgamma_1$-strongly regular for all $\upnu$ except for $\upnu= 0$. Such a homotopy induces a homotopy $\vec{P}^\upnu$, and families $\vec\upkappa^\upnu\in {\rm Crit} W_{\vec{P}^\upnu}$. 

\begin{thm}
If $(a_1, F^0_1)$ is strongly regular (i.e., there is no crossing), then 
\begin{align*}
\# {\mc M}_{\vec{P}^-} \left( \vec{C}, B, \vec\upkappa^{-1} \right) = \# {\mc M}_{\vec{P}^+} \left( \vec{C}, B, \vec\upkappa^{+1} \right).
\end{align*}
\end{thm}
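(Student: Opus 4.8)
The plan is to run the same parametrized‑cobordism argument that was used for the independence of the axial part in Subsection \ref{subsection41}, the one genuinely new input being the compactness of the universal moduli space, which is exactly where the hypothesis that $(a_1,F_1^0)$ is strongly regular (equivalently, that no crossing occurs in the homotopy) is used. First I would fix the homotopy $\wt{F}_1=\{F_1^\upnu\}_{\upnu\in[-1,1]}$ so that $(a_1,F_1^\upnu)$ is $\upgamma_1$‑regular for every $\upnu$ (possible by Hypothesis \ref{hyp33}) and, using the no‑crossing hypothesis, $\upgamma_1$‑strongly regular for every $\upnu$ including $\upnu=0$. Since $\upgamma_1$‑regularity makes the critical points of $W_1^\upnu|_{X_{\upgamma_1}}$ nondegenerate and smoothly varying in $\upnu$, the prescribed asymptotic datum $\vec\upkappa^{-1}$ propagates uniquely to a smooth family $\vec\upkappa^\upnu\in{\rm Crit}\,W_{\vec{P}^\upnu}$ with endpoint $\vec\upkappa^{+1}\in{\rm Crit}\,W_{\vec{P}^+}$, so I may assemble
\begin{align*}
\wt{\mc M}:=\bigcup_{\upnu\in[-1,1]}{\mc M}_{\vec{P}^\upnu}\left(\vec{C},B,\vec\upkappa^\upnu\right)
\end{align*}
as the zero locus of a Fredholm section of a Banach space bundle over the Banach manifold of Section \ref{section3} enlarged by the extra parameter $\upnu\in[-1,1]$; its index is $\chi(\vec{\mc C},B)+1=1$ by (\ref{equation34}).

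Next I would prove $\wt{\mc M}$ is compact. The compactness theorem of \cite{Tian_Xu} says that for each energy bound $E$ the space of bounded solutions with energy $\le E$ is compact up to the splitting off of solitons at broad punctures; but since every $\vec{P}^\upnu$ is \emph{strongly} regular, the critical values of $W_i^\upnu|_{X_{\upgamma_i}}$ have pairwise distinct imaginary parts for all $i$ and all $\upnu$, so no BPS soliton can form at any broad puncture. Combined with the a priori energy bound $E(A,u)\le E(B)$ and the compactness of $[-1,1]$, this gives compactness of $\wt{\mc M}$. Then, following \cite{Tian_Xu_3}, I would equip $\wt{\mc M}$ with a Kuranishi structure with boundary that restricts over $\upnu=\pm1$ to the Kuranishi structures of Theorem \ref{thm35} on the endpoint fibers; its oriented boundary is
\begin{align*}
\partial\wt{\mc M}={\mc M}_{\vec{P}^+}\left(\vec{C},B,\vec\upkappa^{+1}\right)-{\mc M}_{\vec{P}^-}\left(\vec{C},B,\vec\upkappa^{-1}\right).
\end{align*}

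Since $\chi(\vec{\mc C},B)=0$, the space $\wt{\mc M}$ has virtual dimension one, and the signed count of the boundary of a compact oriented one‑dimensional virtual cycle is zero, which yields
\begin{align*}
\#{\mc M}_{\vec{P}^-}\left(\vec{C},B,\vec\upkappa^{-1}\right)=\#{\mc M}_{\vec{P}^+}\left(\vec{C},B,\vec\upkappa^{+1}\right).
\end{align*}
The hard part will be, as usual, the construction of the boundary Kuranishi structure on $\wt{\mc M}$ and the verification that its boundary is exactly the two endpoint fibers with the correct orientations; this is deferred to \cite{Tian_Xu_3}. At the level of the present paper, the essential point is the compactness step: it is precisely the strong regularity of $F_1^0$ that prevents a soliton from bubbling at $z_1$ near $\upnu=0$, and dropping this hypothesis would introduce an additional boundary stratum of $\wt{\mc M}$, which is the source of the wall‑crossing correction in the general case treated next.
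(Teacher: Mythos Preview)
Your proposal is correct and follows exactly the approach the paper indicates: the paper's own proof consists of the single sentence ``The proof is a similar homotopy argument as used in Subsection~\ref{subsection41},'' and what you have written is precisely a fleshed-out version of that argument---build the parametrized moduli space over $[-1,1]$, use strong regularity at every $\upnu$ to rule out soliton bubbling and obtain compactness, then read off the equality of virtual counts from the vanishing of the signed boundary count of the one-dimensional Kuranishi cobordism (with the technical construction deferred to \cite{Tian_Xu_3}). The only minor quibble is that you tacitly assume $\chi(\vec{\mc C},B)=0$ when writing the index as $1$; this is harmless since for $\chi\neq 0$ both virtual counts vanish by definition.
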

The proof is a similar homotopy argument as used in Subsection \ref{subsection41}. Note that the $\infty$-relative cycles persist under the homotopy in this case, hence have same intersection numbers with any broad states. Therefore in the case of no crossing, the correlation functions defined on the two sides of the homotopy are equal.

Now we consider the case that a crossing happens at $\upnu = 0$, between $ \left( \upkappa_1^\upnu\right)_-$ and $\left( \upkappa_1^\upnu\right)_+$, with 
\begin{align*}
{\rm Re} F^\upnu \left( \upkappa_-^\upnu \right) < {\rm Re} F^\upnu \left( \upkappa_+^\upnu \right),\ \forall \upnu \in [-1,1].
\end{align*}
Then for each $\vec\upkappa^\upnu \in {\rm Crit} W_{\vec{P}^\upnu}$, we denote by $\vec\upkappa_\pm^\upnu \in {\rm Crit} W_{\vec{P}^\upnu}$ the asymptotic data obtained by replacing $\upkappa_1^\upnu$ by $\left(\upkappa_1^\upnu \right)_\pm$. Then we have the following wall-crossing formula.
\begin{thm}\label{thm45} For any family $\vec\upkappa^\upnu \in {\rm Crit} W_{\vec{P}^\upnu}$, we have
\begin{multline*}
\# {\mc M}_{\vec{P}^+} \left( \vec{\mc C}, B, \vec\upkappa^{+1} \right) - \# {\mc M}_{\vec{P}^-} \left( \vec{\mc C}, B, \vec\upkappa^{-1} \right)\\
= - (-1)^{\wt{F}_1} \delta_{\vec\upkappa, \vec\upkappa_-} \cdot \#_{BPS}\left( \left( \upkappa_1^0 \right)_-, \left( \upkappa_1^0 \right)_+ \right)\cdot \# {\mc M}_{\vec{P}^-} \left( \vec{\mc C}, B, \vec\upkappa_+ \right).
\end{multline*}
Here $\delta_{\vec\upkappa^\upnu, \vec\upkappa_-^\upnu}$ is the Kronecker delta, and $\#_{BPS}\left( \left( \upkappa_1^0 \right)_-, \left( \upkappa_1^0 \right)_+ \right)$ is the (algebraic) counts of the number of BPS solitons in $Q_{\upgamma_1}^{a_1}$ for the function $F_1^0$ between the two critical points.
\end{thm}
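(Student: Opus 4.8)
The plan is to analyze the compactified one–dimensional universal moduli space
$\cup_{\upnu\in[-1,1]}{\mc M}_{\vec{P}^\upnu}\left(\vec{\mc C},B,\vec\upkappa^\upnu\right)$ attached to the strongly regular homotopy $\wt{F}_1=\{F_1^\upnu\}$ and to read off the wall–crossing formula from its oriented boundary, together with a count of the ``lost'' configurations that degenerate as $\upnu\to 0$. First I would set up, following \cite{Tian_Xu_3}, a Kuranishi structure with corners on this parametrized moduli space. Away from $\upnu=0$ the perturbation is strongly regular, so the parametrized moduli space is a compact 1–manifold (with Kuranishi structure) whose ``honest'' boundary at the two ends contributes
$\#{\mc M}_{\vec{P}^+}\left(\vec{\mc C},B,\vec\upkappa^{+1}\right)-\#{\mc M}_{\vec{P}^-}\left(\vec{\mc C},B,\vec\upkappa^{-1}\right)$.
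The remaining boundary consists of broken configurations arising as $\upnu\to 0$: precisely when $\vec\upkappa^0$ has $\upkappa_1^0=\left(\upkappa_1^0\right)_-$, a BPS soliton in $Q_{\upgamma_1}^{a_1}$ running from $\left(\upkappa_1^0\right)_-$ to $\left(\upkappa_1^0\right)_+$ can bubble off at the broad puncture $z_1$, leaving a solution in ${\mc M}_{\vec{P}^-}\left(\vec{\mc C},B,\vec\upkappa_+\right)$. The total count of such broken ends is therefore
$\#_{BPS}\left(\left(\upkappa_1^0\right)_-,\left(\upkappa_1^0\right)_+\right)\cdot\#{\mc M}_{\vec{P}^-}\left(\vec{\mc C},B,\vec\upkappa_+\right)$,
and since the total oriented boundary of a compact 1–dimensional Kuranishi space counts to zero, the two contributions cancel up to sign; the sign is $(-1)^{\wt{F}_1}$ with $\delta_{\vec\upkappa,\vec\upkappa_-}$ selecting the relevant asymptotic data.

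The key steps, in order, are: (1) Construct the parametrized Kuranishi structure with corners on $\cup_\upnu{\mc M}_{\vec{P}^\upnu}$, using the compactness theorem of \cite{Tian_Xu} extended to allow $\upnu$ to vary; the only noncompactness comes from soliton degeneration at $z_1$, which is controlled since away from $\upnu=0$ each $F_1^\upnu$ is strongly regular and only at $\upnu=0$ do two critical values of ${\rm Im}F_1^0|_{Q_{\upgamma_1}^{a_1}}$ collide. (2) Identify the non-end boundary strata: a standard gluing/neck-stretching argument near $z_1$ shows these are in bijection with pairs (a solution in ${\mc M}_{\vec{P}^-}(\vec{\mc C},B,\vec\upkappa_+)$, a BPS soliton from $(\upkappa_1^0)_-$ to $(\upkappa_1^0)_+$), using the condition ${\rm Re}F^\upnu((\upkappa_1^\upnu)_-)<{\rm Re}F^\upnu((\upkappa_1^\upnu)_+)$ together with the coincidence of imaginary parts exactly at $\upnu=0$, which is the defining feature of a crossing. (3) Fix orientations: orient the parametrized moduli space, compute the induced boundary orientations at the two ``$\upnu$-ends'' and at the broken ends, and check that the sign of the soliton-bubbling boundary relative to the $\upnu$-ends is $-(-1)^{\wt{F}_1}$; here the sign $(-1)^{\wt{F}_1}$ is the sign of the crossing, exactly matching the sign appearing in the Picard–Lefschetz formula of Theorem \ref{thm43}. (4) Sum the boundary contributions to zero to obtain the stated identity, noting that the Kronecker delta $\delta_{\vec\upkappa,\vec\upkappa_-}$ records that bubbling only occurs for the asymptotic datum whose $z_1$-component is $\left(\upkappa_1^0\right)_-$.

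The main obstacle I expect is step (2) together with its orientation bookkeeping in step (3): one must prove that the gluing parameter for attaching a BPS soliton at the broad puncture runs over exactly a half-open interval (so that each glued family contributes one boundary point of the 1–manifold), and that no other degenerations occur — in particular that energy does not escape at interior points of $\Sigma^*$ and that no additional soliton breaking happens in the cylindrical end. This is precisely the interior-crossing analysis closest in spirit to \cite{FJR3}, and it requires the fine asymptotic estimates of \cite{Tian_Xu} near broad punctures; the delicate point is matching the deformation theory of the glued object (a solution plus a soliton) with the Kuranishi model of the parametrized moduli space near $\upnu=0$, so that the algebraic count of BPS solitons — which by the discussion after Theorem \ref{thm43} equals the Picard–Lefschetz intersection number $\left\langle\partial[\upkappa_{i^+}^{\gamma_-}],\partial[\upkappa_{i^-}^{\gamma_+}]\right\rangle_a$ — appears with the correct multiplicity and sign. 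The remaining compactness input and the construction of Kuranishi structures with corners are deferred to \cite{Tian_Xu_3}, so the present argument is a description of how those ingredients assemble into the wall-crossing formula.
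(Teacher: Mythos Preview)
Your approach is essentially the same cobordism argument the paper sketches: form the parametrized moduli space ${\mc N}=\bigcup_{\upnu\in[-1,1]}{\mc M}_{\vec{P}^\upnu}(\vec{\mc C},B,\vec\upkappa^\upnu)$, identify its oriented (virtual) boundary as the two $\upnu$-ends together with the broken configurations at $\upnu=0$, and read off the formula from $\partial{\mc N}=0$, with the detailed Kuranishi construction deferred to \cite{Tian_Xu_3}.

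One technical point you have slightly misplaced and which the paper explicitly flags: the solitons that actually bubble off in the compactification of ${\mc N}$ are gradient trajectories $x:{\mb R}\to X_{\upgamma_1}$ of the Lagrange-multiplier function $W_1=pQ-a_1p+F_1$ on the ambient space $X_{\upgamma_1}$, \emph{not} gradient trajectories of $F_1|_{Q_{\upgamma_1}^{a_1}}$ on the hypersurface. A separate adiabatic-limit argument (the paper cites \cite{Lagrange_multiplier}) is then invoked to show that, for $F_1$ small, the two algebraic counts agree, so that $\#_{BPS}$ can indeed be interpreted as the intersection number of vanishing cycles in $Q_{\upgamma_1}^{a_1}$ appearing in the Picard--Lefschetz formula. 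Your step~(2) should be adjusted to reflect this two-stage identification; otherwise your outline matches the paper's.
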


The proof uses a cobordism argument and the details are given in \cite{Tian_Xu_3}. In LG A-model, a similar wall-crossing formula was proved in \cite[Theorem 6.16]{FJR3}. We consider the universal moduli space
\begin{align*}
{\mc N}:= \bigcup_{\upnu \in [-1,1]} {\mc M}_{\vec{P}^\upnu} \left( \vec{\mc C}, B, \vec\upkappa^\upnu \right).
\end{align*}
This space is not compact due to degeneration of solitons at the slice of $\upnu = 0$. The soliton appeared are connecting $\left( \upkappa_1^0 \right)_-$ and $\left( \upkappa_1^0 \right)_+$ and stable solutions with BPS solitons exist in a codimension 1 subset and stable solutions with non-BPS solitons exist in a higher codimensional subset. Therefore, in the virtual sense, 
\begin{align*}
\partial {\mc N} \simeq \left( \bigcup_{\upnu = -1,1}{\mc M}_{\vec{P}^\upnu} \left( \vec{\mc C}, B, \vec\upkappa^\upnu \right)\right) \cup \left( {\mc M}_{\vec{P}^0} \left( \vec{\mc C}, B, \left( \vec\upkappa^0 \right)_+ \right)\times {\mc M}_{BPS} \left( \left( \upkappa_1^0 \right)_-, \left( \upkappa_1^0 \right)_+ \right) \right).
\end{align*}
Here ${\mc M}_{BPS}$ is the moduli of BPS solitons. Taking care of the orientation of the boundary, Theorem \ref{thm45} can be proved.

One difference between the proof of Theorem \ref{thm45} is that the BPS soliton used to compactify ${\mc N}$ are solutions $x: {\mb R} \to X_{\upgamma_1}$ to the equation
\begin{align}\label{equation43}
x'(s)  + \nabla W_1 = 0
\end{align}
but not for maps into $Q^{a_1}_{\upgamma_1}$. However, if the function $F_1$ is small, then solutions to (\ref{equation43}) are geometrically very close to solutions to 
\begin{align*}
y'(s) + \nabla \left( F_1|_{Q_{\upgamma_1}^{a_1}} \right) = 0.
\end{align*}
(See \cite{Lagrange_multiplier} for detailed treatment about the adiabatic limit of gradient flows in real Morse theory). Therefore the algebraic counting of BPS solitons will be the same as the intersection number between cycles in $Q^{a_1}_{\upgamma_1}$.

Therefore, to compare the correlation functions defined for $\vec{P}^+$ and $\vec{P}^-$, we see that the wall-crossing term appeared in the change of the virtual counts given in Theorem \ref{thm45} and the wall-crossing term appeared in the change of the $\infty$-relative cycles given in Theorem \ref{thm43} cancel each other. Similar situation happens for the LG A-model correlation function (see \cite{FJR3}). Therefore the correlation functions on the two sides of the homotopy are equal.

\bibliography{symplectic_ref}
	
\bibliographystyle{amsalpha}

\end{document}